\newtheorem{theorem}{Theorem}
\newtheorem{lemma}[theorem]{Lemma}
\newtheorem{definition}[theorem]{Definition}
\newtheorem{corollary}[theorem]{Corollary}
\newtheorem{observation}[theorem]{Observation}
\newtheorem{proposition}[theorem]{Proposition}
\newtheorem{remark}[theorem]{Remark}
\theoremstyle{remark}
\newcommand{\conv}{\text{conv}}
\title{Approximate cycle double cover\thanks{Supported by the
Czech Science Foundation project Flows, cycles, surfaces and polynomials
(grant number 25-16627S) and by the project GAUK182623 of the Charles University Grant Agency.}}
\author{Babak Ghanbari, Robert Šámal \\
Computer Science Institute of Charles University \\
\{babak, samal\}@iuuk.mff.cuni.cz
}
\date{\today}
\begin{document}
\maketitle

\begin{abstract}
    The Cycle double cover (CDC) conjecture states that for every bridgeless graph $G$,
there exists a family $\mathcal{F}$ of cycles such that each edge of the graph
is contained in exactly two members of $\mathcal{F}$. Given an embedding of a
graph~$G$, an edge $e$ is called a \emph{singular edge} if it is visited twice by the
boundary of one face. The CDC conjecture is equivalent to bridgeless cubic graphs
having an embedding with no singular edge. In this work, we introduce nontrivial
upper bounds on the minimum number of singular edges in an embedding of a cubic
graph. Moreover, we present efficient algorithms to find embeddings satisfying these 
bounds. 
\end{abstract}

\noindent
\textbf{Keywords:} Cycle double cover, Embedding, Approximation

%% \linenumbers
\section{Introduction}
The Cycle double cover conjecture (CDC for short) states that for every bridgeless graph $G$,
there exists a family $\mathcal{F}$ of cycles such that each edge of the graph
is contained in exactly two members of $\mathcal{F}$. 
It was made independently by Szekeres
\cite{szekeres_1973} and Seymour \cite{seymour1979sums} in the 70s and is now
widely considered to be among the most important open problems in graph theory.
A reason for this is its close relationship with topological graph theory,
integer flow theory, graph coloring, and the structure of snarks. More detail
about the conjecture and historical notes can be found in \cite{zhang_2012}. 

Here, we mention just some of the partial results. 
It is known (and easy) that the CDC conjecture is true for Hamiltonian cubic graphs, more generally for $3$-edge-colorable cubic graphs. 
Tarsi~\cite{Tarsi} and Goddyn~\cite{Goddyn}
proved that graphs with a Hamilton path admit a 6-CDC, a CDC with six cycles (or, more precisely, 
cycles that can be combined into six Eulerian graphs). 
Goddyn~\cite{Goddyn} and Häggkvist, McGuinness~\cite{HMcG} construct a CDC, among else, 
in graphs that have a spanning subgraph that is a Kotzig graph.

For a $2$-factor $S$ of $G$, the oddness of $S$, denoted by $odd(S)$, is the
number of odd components of $S$. For a cubic graph $G$, the oddness of~$G$, denoted by $odd(G)$, is the minimum of $odd(S)$ for all $2$-factors~$S$ of
$G$. The determination of the oddness of a cubic graph is a hard problem since
already the determination of $3$-edge-colorability of a cubic graph is an
NP-complete problem \cite{Holyer} and a cubic graph is $3$-edge-colorable if and
only if it has oddness~$0$. Graphs with low oddness have drawn significant
interest in the study of graph 
theory, especially in relation to the CDC conjecture. Huck and Kochol
\cite{HUCK1995119} showed that every bridgeless cubic graph with oddness at most $2$
has a $5$-CDC. This result was further improved by Huck \cite{HUCK2001125}, and
by Häggkvist, McGuinness~\cite{HMcG}, independently, for oddness $4$ graphs. 

It is known~{\cite[Section 1.2]{zhang_2012}} that it is enough to prove the CDC conjecture for bridgeless cubic
graphs. It is also known~{\cite[Section 1.4]{zhang_2012}} that for a bridgeless cubic graph, each cycle double cover
consists of facial boundaries of some embedding. However, not every embedding gives a cycle double 
cover -- a face boundary may visit one edge twice. We will call such edges 
\emph{singular} (following \cite{Mohar}). Thus, the CDC conjecture is equivalent to every bridgeless cubic
graph having an embedding with no singular edges. We approximate
this statement by looking for an embedding with not too many singular edges.

This has a precedent: 
Bender and Richmond \cite{Bender} estimate the number of singular edges using Euler characteristics. They showed that an embedding of a $3$-edge-connected graph on a surface other than the projective plane and sphere has at most $3-3\chi$ singular edges where $\chi$ is the Euler characteristic of the surface. For the $3$-edge-connected graphs embedded on the projective plane or the sphere, the number of singular edges is at most $1$ or~$0$, respectively. They also claim (without proof) a $k(1-\chi)/(k-2)$ bound for the case of $k$-edge-connected graphs.

One method to compute the Euler characteristic of a surface is by determining its genus. For a simple graph $G$, let $g$ denote its genus, defined as the smallest integer $h$ for which $G$ can be embedded in the orientable surface~$\mathbb{S}_h$ of genus $h$. Similarly, let $\tilde{g}$ be the non-orientable genus of $G$, which is the smallest integer $c$ such that $G$ can be embedded in the non-orientable surface~$\mathbb{N}_c$ with crosscap number $c$.  

It is well known that for an orientable embedding, the Euler characteristic satisfies $\chi = 2 - 2g$, while for a non-orientable embedding, it follows that~$\chi~=~2~-~\tilde{g}.$ Applying these formulas along with Bender and Richmond's bound on the number of singular edges, we can reformulate their bound as follows.

\begin{corollary}[\cite{Bender}]\label{num-singular-Bender}
    The number of singular edges in an orientable embedding of genus $g$ is at most $6g - 3$. The number of singular edges in a non-orientable embedding of genus $\tilde{g}$ is at most $3\tilde{g}- 3$.
\end{corollary}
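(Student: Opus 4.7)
The corollary is purely an algebraic reformulation of Bender and Richmond's bound $3-3\chi$ using the standard relation between Euler characteristic and genus, so the main task is careful substitution and bookkeeping of the small excluded surfaces.

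First, I would recall the two standard identities: for an orientable embedding of genus $g$, the Euler characteristic of the underlying surface is $\chi = 2-2g$; for a non-orientable embedding of crosscap number $\tilde{g}$, it is $\chi = 2-\tilde{g}$. Both are stated explicitly in the paragraph preceding the corollary, so they can be cited directly.

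Second, I would plug these expressions into Bender and Richmond's inequality stated at the end of the previous paragraph. In the orientable case this gives $3-3(2-2g)=6g-3$, and in the non-orientable case $3-3(2-\tilde{g})=3\tilde{g}-3$, which are exactly the bounds claimed in the statement.

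The only subtle point — and the closest thing to an obstacle — is that Bender and Richmond's bound $3-3\chi$ is stated only for surfaces other than the sphere and the projective plane, and indeed the derived formulas $6g-3$ and $3\tilde{g}-3$ become negative precisely on those two exceptional surfaces. Thus the corollary should be read as applying to orientable genus $g\ge 1$ and non-orientable genus $\tilde{g}\ge 2$; the sphere and projective plane are covered separately by the sharper bounds $0$ and~$1$ recorded in \cite{Bender} and quoted in the preceding discussion. Once this is noted, no further argument is required.
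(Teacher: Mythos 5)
Your proposal is correct and follows essentially the same route as the paper: substitute $\chi = 2-2g$ (orientable) and $\chi = 2-\tilde{g}$ (non-orientable) into Bender and Richmond's $3-3\chi$ bound, which is precisely the reformulation the paper intends. Your extra remark on the sphere and projective plane (where the sharper bounds $0$ and $1$ apply) is a sensible clarification already implicit in the paper's surrounding discussion.
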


Determining the minimum genus of a graph is generally an NP-complete problem \cite{THOMASSEN-general}, and this remains true even for cubic graphs \cite{THOMASSEN-cubic}. However, in special cases, such as complete graphs, the exact minimum genus is known~\cite{ringel2012map}. We show that our bounds on the number of singular edges are stronger than those obtained by Bender and Richmond by providing an example of a cubic graph with a large genus and then comparing the bounds on the number of singular edges from each method. To this end, we start with a complete graph $K_n$ and extend it to a cubic graph $G_n$ by replacing each vertex of $K_n$ with a circuit $C_{n-1}$
(this construction is not unique). The resulting graph $G_n$ has $n(n-1)$ vertices. Each vertex of the circuit is incident with two edges within the circuit and one edge from $K_n$ (see \hyperref[G_n]{Figure~1}). Therefore, $G_n$ is cubic, and since $K_n$ is a minor of $G_n$, the following result follows immediately as a corollary of Ringel's bound on the genus of complete graphs~\cite{ringel2012map}.

\begin{figure}\label{G_n}
    \begin{center}
        \includegraphics[scale=.18]{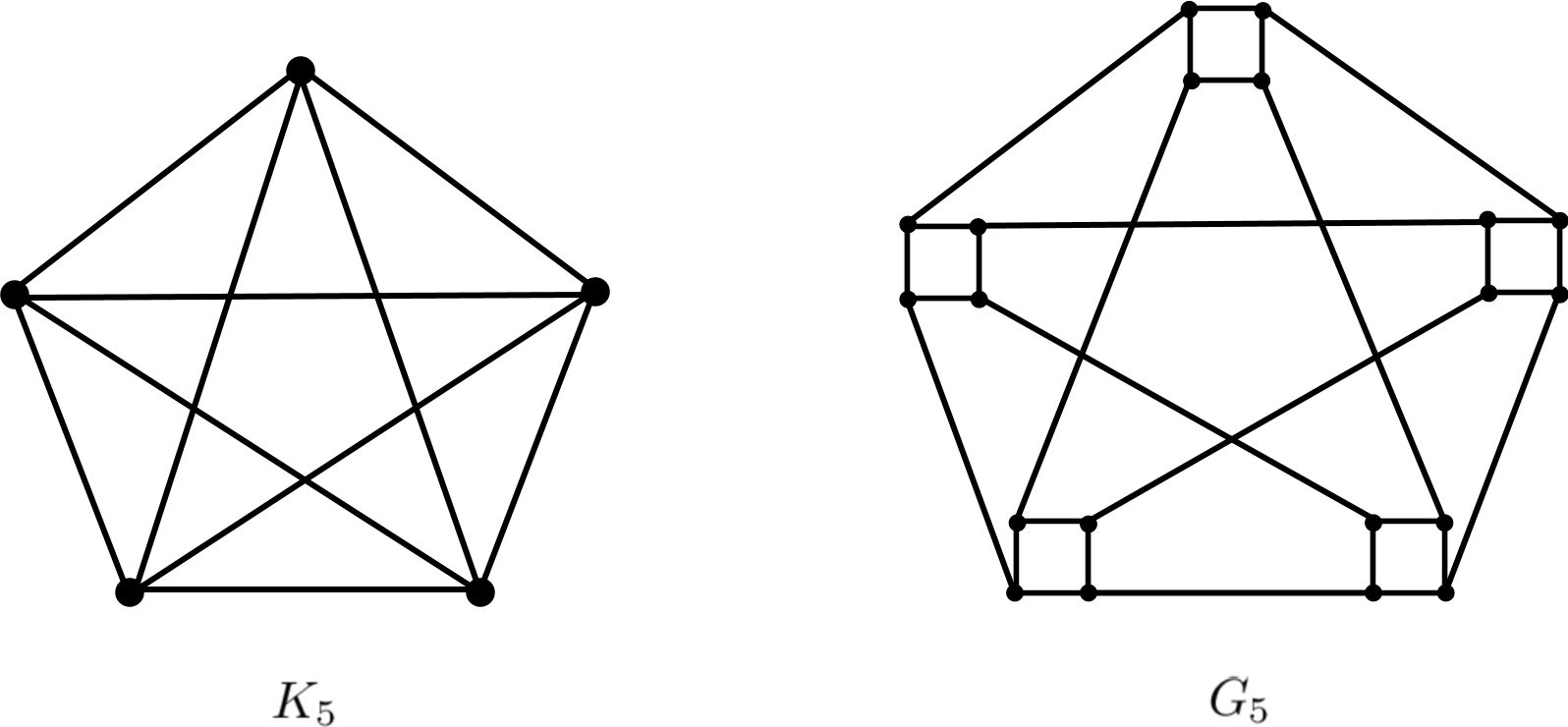}
    \end{center}
    \caption{An example of graphs $K_5$ and $G_5$}
\end{figure}

\begin{corollary}\label{cor-genus-G_n}
    If $n\geq 3$, then
        $$g(G_n) \geq g(K_n) = \left\lceil\dfrac{(n - 3)(n - 4)}{12}\right\rceil.$$ 
    If $n \geq 5$ and $n \neq 7$, then
        $$\tilde{g}(G_n) \geq \tilde{g}(K_n) = \left\lceil\dfrac{(n - 3)(n - 4)}{6}\right\rceil.$$
\end{corollary}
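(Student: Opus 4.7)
The plan is to deduce the inequalities from minor monotonicity of the (orientable and non-orientable) genus, combined with the explicit formulas for the genus of $K_n$ due to Ringel (and Ringel--Youngs for the orientable case).

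First, I would verify that $K_n$ is a minor of $G_n$. By construction, $G_n$ is obtained from $K_n$ by blowing up each vertex $v$ into a cycle $C_{n-1}$ whose $n-1$ vertices inherit the edges previously incident with $v$. Contracting each such cycle $C_{n-1}$ back to a single vertex recovers exactly $K_n$, so $K_n$ is indeed a minor (in fact a topological minor) of $G_n$.

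Next, I would invoke the standard fact that both the orientable genus $g$ and the non-orientable genus $\tilde g$ are minor-monotone parameters on the class of connected graphs: if $H$ is a minor of $G$, then $g(H) \le g(G)$ and $\tilde g(H) \le \tilde g(G)$. Minor monotonicity for the orientable genus follows because contracting an edge or deleting an edge/vertex can only simplify any embedding, and analogously for the non-orientable genus. Applying this with $H=K_n$ and $G=G_n$ yields $g(K_n)\le g(G_n)$ and $\tilde g(K_n)\le \tilde g(G_n)$.

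Finally, I would substitute Ringel's formulas. For $n\ge 3$, the Ringel--Youngs theorem gives $g(K_n)=\lceil (n-3)(n-4)/12\rceil$, and for $n\ge 5$ with $n\ne 7$, Ringel's theorem gives $\tilde g(K_n)=\lceil (n-3)(n-4)/6\rceil$. Combining these with the minor monotonicity inequalities above delivers the two claimed bounds. The only step one might consider subtle is minor monotonicity of genus, but this is entirely standard, so there is no real obstacle; the corollary is essentially a direct quotation of Ringel's results applied to the minor $K_n\preceq G_n$.
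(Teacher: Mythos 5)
Your argument is correct and matches the paper's reasoning: the paper likewise observes that $K_n$ is a minor of $G_n$ (each blown-up circuit contracts back to a vertex) and then cites Ringel's formulas, using minor-monotonicity of both genus parameters. Nothing further is needed.
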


The best bound on the number of singular edges that 
\hyperref[num-singular-Bender]{Corollary~\ref{num-singular-Bender}} can give is therefore 
$ \frac{n^2}{2}(1 - o(1)) $, both in the orientable and non-orientable cases. However, since $G_n$ has $n(n-1)$ vertices, by our \hyperref[obs19]{Theorem~\ref{obs19}}, we get a better bound of $\frac{n^2}{10}$ which is one advantage of our method compared to  Bender and Richmond's method.

%TODO:Zha \cite{ZHA} studies the structure of singular edges in a minimum-genus embedding. 
%({\color{red}{
%This is just a note: In their paper, they call a singular edge a monofacial edge, and they call a monofacial edge consistent if it is traversed twice in the same direction. (we call such edge bad negative, and we haven't mentioned it in this paper, we decided to talk about it in a paper regarding facial diagram later)
%}})

We prove nontrivial upper bounds on the minimum number of singular edges in an
embedding of a cubic graph in \hyperref[obs17]{Theorem~\ref{obs17}},
\hyperref[obs19]{Theorem~\ref{obs19}}, and \hyperref[thm6]{Theorem~\ref{thm6}}.
Our main technique is using a \emph{partial CDC}, a collection of cycles that 
cover every edge once or twice (by two different cycles) such that no 
``angle'' (pair of adjacent edges) is contained in two distinct cycles. 
In \hyperref[lem7]{Lemma~\ref{lem7}},  we show that a partial CDC can always 
be extended to an embedding of the graph that gives an embedding with many regular edges
(all edges covered by the partial CDC will be regular edges in the embedding). 
It would be interesting to know what the limit of this technique is -- how many 
singular edges there must be in an extension of a given partial CDC. A known example 
showing that we cannot generate a CDC by extending an arbitrary partial CDC 
is the Petersen graph with a 2-factor consisting of two 5-circuits (see \hyperref[CDC-petersen]{Observation~\ref{CDC-petersen}}).

The complexity of testing whether a given graph has a cycle double cover is unknown. 
However, it is presumed that it can be done in a polynomial time, as we only need to 
check whether the graph has any bridges. Finding a concrete CDC in a given graph may be nontrivial, even if the CDC conjecture holds true.
We start the exploration 
of this question by giving most of our results in an algorithmic version: we provide
polynomial time algorithms for finding (in different contexts) an embedding 
of a given cubic graph where the number of singular edges is bounded by a linear function
of the number of vertices. 

Given a cubic graph $G$, and a $3$-edge-coloring for $G$, it is trivial to find
a CDC because a 3-edge-colorable cubic graph is doubly covered by the three
bi-colored $2$-factors (see \cite[Theorem 1.3.2]{zhang_2012}). However, it is
hard to find a $3$-edge-coloring for the graph, even if it is cubic. 
It is well known that the decision problem for $3$-edge-coloring is NP-complete
\cite{Holyer}. We now explain that also the problem of finding a
$3$-edge-coloring of a $3$-edge-colorable graph is hard.

Suppose we have an algorithm $A$ that for any cubic $3$-edge-colorable graph~$G$
on $n$ vertices finds a $3$-edge-coloring in at most~$f(n)$ steps, where $f(n)$ 
is a polynomial. We run $A$ and count steps. 
If $A$ finds a coloring, we verify it (for graphs that are not $3$-edge-colorable, the algorithm can do anything). If we fail to find a coloring after $f(n)$~steps of $A$, we declare the graph not to be 3-edge-colorable. 
It is easy to see that this algorithm would decide $3$-edge-colorability and 
that the overhead for counting of steps (and verification of the coloring found)
will keep the algorithm in polynomial time. 

In the case of a Hamiltonian cubic graph, finding a CDC is straightforward if we are given a Hamiltonian cycle: 
first, a $3$-edge-coloring can be obtained by using two
colors to color the Hamiltonian cycle and assigning the third color to the
remaining edges. As previously mentioned, a $3$-edge-coloring yields a CDC easily. 
However, it is NP-complete to decide if a graph has a Hamiltonian cycle \cite{Garey}. 
As in the previous paragraph, it follows it is not possible to find a Hamiltonian cycle 
in a cubic graph in a polynomial time, even if we know it exists. 
%Moreover, there are graphs with no Hamilton cycle even if the graph is $3$-edge-colorable.  
As a result, finding a CDC remains difficult even when the graph is Hamiltonian.

By the above discussion, we wanted to illustrate that even in the context of the CDC, it is relevant to indicate how efficient our methods are. We state all our results including the time complexity of an algorithm that finds our object of interest: a collection of closed walks with a bounded number of singular edges. 

%In this paper, we address the conjecture not by searching for cycles directly
%but by finding closed walks that are ``almost cycles,'' face boundaries with few
%singular edges. 

The structure of the paper is as follows: In the next section, we
provide the necessary definitions. In Section 3, we introduce the notion of a
partial CDC and state a key lemma, which is utilized in the subsequent section
to prove our main theorems. In Section 4, we present our main theorems and their
proofs. Finally, we conclude with a summary in the final section.

\section{Preliminaries} 
We follow the notation of the book Graphs on Surfaces \cite{Mohar}. Here, we list the definitions that are most important to us.

A graph in which all vertices have degree $3$ is called \emph{cubic}. A \emph{circuit} is 
a connected $2$-regular graph. A \emph{cycle} is a graph in which the degree of each vertex 
is even. A \emph{bridge} in a graph $G$ is an edge whose removal increases the number of 
components of $G$. Equivalently, a bridge is an edge that is not contained in any circuit 
of $G$. A graph is \emph{bridgeless} if it contains no bridge.
A connected graph is \emph{$k$-edge-connected} if it remains connected whenever fewer than $k$ edges are removed. 
A graph is \emph{cyclically $k$-edge-connected}, if at least $k$ edges must be removed to disconnect 
it into two components such that each component contains a cycle.

%\begin{definition}
A graph $G$ is \emph{embedded} in a surface $S$ if the vertices of $G$ are distinct elements of $S$ and every edge of $G$ is a simple arc connecting in $S$ the two vertices which it joins in $G$, such that its interior is disjoint from other edges and vertices. An \emph{embedding} of a graph $G$ in $S$ is an isomorphism of $G$ with a graph $G'$ embedded in $S$. If there is an embedding of $G$ into $S$, we say that $G$ can be embedded into $S$. 
%\end{definition}

%\begin{definition}
Let $G$ be a graph that is cellularly embedded in a surface $S$, that is, the interior of every face is homeomorphic to an open disk. Let $\pi = \{\pi_v | v \in V(G)\}$ where $\pi_v$ is the cyclic permutation of the edges incident with the vertex $v$ such that $\pi_v(e)$ is the successor of $e$ in the clockwise ordering around $v$. The cyclic permutation $\pi_v$ is called the \emph{local rotation} at $v$, and the set $\pi$ is the \emph{rotation system} of the given embedding of $G$ in $S$. 
%\end{definition}

%\begin{definition}
Let $G$ be a connected multigraph. A \emph{combinatorial embedding} of $G$ is a
pair $(\pi, \lambda)$ where $\pi = \{\pi_v | v \in V(G)\}$ is a rotation system,
and $\lambda$ is a signature mapping which assigns to each edge $e \in E(G)$ a
sign $\lambda(e) \in \{-1, 1\}$. If $e$ is an edge incident with $v \in V(G)$,
then the cyclic sequence $e, \pi_v(e), \pi_v^2(e), \dots$ is called the
$\pi$-\emph{clockwise ordering} around $v$ (or the \emph{local rotation} at
$v$). Given an embedding $(\pi, \lambda)$ of $G$ we say that $G$ is 
$(\pi, \lambda)$-\emph{embedded}.

It is known that the combinatorial embedding uniquely determines a cellular embedding to some surface, up to homeomorphism~{\cite[Theorems 3.2.4 and 3.3.1]{Mohar}}.
%\end{definition}

%\begin{definition}
%Let $G = (V, E)$ be a graph. 
A \emph{closed walk} is a sequence $(v_0, e_0, v_1,$ $e_1, \dots, e_{n-1}, v_n)$ where $v_0 = v_n$ and for every $i$, $e_i = \{v_i,v_{i+1}\}$.
%\end{definition}

\begin{definition}\label{def4}
Let $(\pi, \lambda)$ be an embedding of a graph $G$. A \emph{$(\pi, \lambda)$-facial walk} (or \emph{facial walk}) $F$ is a closed walk $(v_0, e_0, v_1, e_1, \dots,$ $ e_{n-1}, v_n)$ where  
$v_0 = v_n$, $e_0 = e_n$, and
for every $i \in \{1, \dots, n\}$,
\[e_i = 
  \begin{cases} 
   \pi_{v_i}(e_{i-1}) & \text{if}~~~   \mu_{i} = 1\\
   \pi^{-1}_{v_i}(e_{i-1}) & \text{if}~~~ \mu_{i} = -1
  \end{cases}
\] 
where $\mu_i = \varepsilon\lambda(e_0)\lambda(e_1)\dots \lambda(e_{i})$ and $\varepsilon \in \{-1, +1\}$ determines the side of the edge $e_0$ that is being used by $F$. 
\end{definition}

\begin{observation}[\cite{Mohar}]\label{obs:face_exists}
    Let $(\pi, \lambda)$ be an embedding of a graph~$G$. For every edge $e$ of~$G$ and every $\varepsilon \in \{-1, +1\}$ there is a facial walk described in 
    \hyperref[def4]{Definition~\ref{def4}}. 
    When we consider cyclic shifts and reversals of a facial-walk to be the same walk, then 
    every edge appears once in two different walks or twice in the same facial walk. 
\end{observation}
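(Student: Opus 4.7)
My plan is to prove both parts of the observation by viewing the face-tracing procedure of \hyperref[def4]{Definition~\ref{def4}} as a deterministic iteration on a finite set of ``edge-sides'' and then invoking invertibility of this iteration.

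First, for existence: given an edge $e$ and a sign $\varepsilon \in \{-1,+1\}$, the recursive rules of the definition unambiguously produce a sequence of triples $(v_i, e_i, \mu_i)$ once we fix an endpoint of $e$ as $v_0$ (the endpoint choice is encoded by the starting direction of traversal of $e_0$). Because $G$ is finite, the state $(v_i, e_i, \mu_i)$ lives in a finite set, so the sequence must eventually revisit some state. I would then check that the face-tracing step is \emph{invertible}: from $(v_{i+1}, e_i, \mu_i)$ one can recover $(v_i, e_{i-1}, \mu_{i-1})$ by applying $\pi_{v_i}^{\mp 1}$ to $e_i$ (the sign determined by $\mu_{i-1} = \mu_i \lambda(e_i)^{-1}$). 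Invertibility forces the first repetition of the state to be the initial state, so the walk closes up with $v_0 = v_n$ and $e_0 = e_n$, producing a facial walk.

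Second, for the multiplicity claim, I would work with the set $\Phi$ of flags, each flag being an edge together with an endpoint, a traversal direction, and a cumulative sign $\mu$. The face-tracing step defines a permutation $\sigma\colon\Phi\to\Phi$ whose cycles, after identifying cyclic shifts, correspond to facial walks. Reversal of a facial walk corresponds to a separate involution on $\Phi$ obtained by flipping $\varepsilon$ and the initial direction; identifying a walk with its reverse quotients $\Phi$ to a set with exactly two elements per edge of $G$. These two edge-sides then either lie in two different $\sigma$-cycles, giving an edge appearing once in each of two different facial walks, or in the same $\sigma$-cycle, giving an edge appearing twice in one facial walk — exactly the dichotomy claimed.

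The main obstacle is the bookkeeping with the signature $\lambda$ in the non-orientable setting: one has to verify carefully that the flag-level map $\sigma$ really is a bijection when traversing an edge with $\lambda(e) = -1$ flips the cumulative sign, and that the reversal involution commutes with $\sigma$ in the right sense so that the quotient count is exactly $2|E(G)|$. Once invertibility and the flag-counting are established, the statement follows from the cycle decomposition of a permutation on a finite set. Since the paper is working within the framework of \cite{Mohar}, I would also note that the result is essentially \cite[Theorem 3.2.4]{Mohar} and one may appeal to it directly.
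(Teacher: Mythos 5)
Your proposal is correct: the paper gives no proof of this observation and simply cites \cite{Mohar}, and your argument (face-tracing as an invertible step on a finite set of flags, hence a permutation whose cycles close up, with the reversal involution pairing the four flag states per edge into two sides traversed exactly once each) is precisely the standard proof from that source, which you also correctly note can be invoked directly. Nothing further is needed.
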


\section{Partial CDC}

Let $G = (V,E)$ be a cubic and bridgeless graph, $(\pi,\lambda)$ be an embedding of~$G$, and $\mathcal{F} = \{{F_1}, \dots, {F_t}\}$ be the set of all facial
walks in this embedding (we take just one facial walk among cyclic shifts and reversals). 
As mentioned in \hyperref[obs:face_exists]{Observation~\ref{obs:face_exists}}, each edge of the graph is covered twice by one or once by two members of $\mathcal{F}$. We say that an edge $e$ is
\emph{singular} if the former case occurs, that is, if there exists a facial walk $ {F_i} \in \mathcal{F}$ that uses $e$ twice. Otherwise, it is called a
\emph{regular} edge. We say that $\mathcal{F}$ is a \emph{facial double cover} (or FDC) of $G$. 

Edges $e$ and $f$ incident with vertex $v$ are $\pi$-\emph{consecutive} if $\pi_v(e) = f$ or $\pi_v(f) = e$. Every 
such pair $\{e, f\}$ of edges forms a $\pi$-\emph{angle}. We 
say that a collection of closed walks $C_1, \dots, C_n$ forms a 
\emph{partial circuit double cover} (or \emph{partial CDC}) 
if 
\begin{enumerate}
    \item[\textbf{C1}.]\label{C1} Each edge is covered at most once by one of $C_i$'s or exactly twice by two different $C_i$ and $C_j$.
    \item[\textbf{C2}.]\label{C2} For every vertex $v$ and edges $e$ and $f$ where 
        $e \cap f = \{v\}$, there exists at most one closed walk $C_i$ such that $\{e, f\} \subseteq E(C_i)$.
\end{enumerate}
In other words, condition \hyperref[C2]{C2} means that no angle is used twice. The following lemma explains the term partial CDC.

\begin{lemma}\label{lem7}
Let $G$ be a bridgeless cubic graph, and $C_1, C_2, \dots, C_t$ be a collection of closed walks in $G$. If 
$C_1, C_2, \dots, C_t$ form a partial CDC, 
then there is an embedding $(\pi, \lambda)$ of $G$ where $C_1, \dots, C_t$ are some of the facial walks of $(\pi, \lambda)$.
Moreover, such an embedding can be found by a linear time algorithm. 
\end{lemma}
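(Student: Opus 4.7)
The plan is to construct the combinatorial embedding $(\pi, \lambda)$ directly from the given partial CDC using the ribbon-graph picture. The key observation is that at any cubic vertex $v$, each of the two possible cyclic orders of the three incident edges realizes all three unordered pairs as $\pi$-angles, so the rotation is essentially free and only the signatures $\lambda$ need to be designed carefully.

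First I would fix $\pi_v$ arbitrarily at each vertex $v$. This assigns to each tab (edge-end) of $v$ a ``clockwise side'' and a ``counterclockwise side'', each adjacent to one of the two corners (angles) at $v$ containing that edge. Whenever a walk $C_i$ passes through $v$ via edges $e, f$, it occupies the corner corresponding to the angle $\{e, f\}$, which singles out a particular side of the tab of $e$ at $v$; I would record this by a sign $s_v^{C_i}(e) \in \{+1, -1\}$, and symmetrically for $f$.

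Next, for each edge $e = uv$ contained in some walk $C_i$, set $\lambda(e) := s_u^{C_i}(e) \cdot s_v^{C_i}(e)$, and $\lambda(e) := +1$ arbitrarily if no walk contains $e$. This is well-defined: by \textbf{C1} at most two walks $C_i, C_j$ contain $e$, and by \textbf{C2} they use the two distinct angles at $u$ involving $e$, which correspond to opposite sides of the tab of $e$ at $u$. Hence $s_u^{C_i}(e) = - s_u^{C_j}(e)$, and similarly at $v$, so the product $s_u(e)\,s_v(e)$ is independent of which walk is used to compute it.

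Finally, I would verify that each $C_i = (v_0, e_0, \ldots, e_{n-1}, v_n)$ is a facial walk of $(\pi, \lambda)$ by tracing the face boundary in the ribbon graph starting at the corner $\{e_{n-1}, e_0\}$ at $v_0$. The choice of $\lambda(e_0) = s_{v_0}(e_0)\, s_{v_1}(e_0)$ guarantees that traversing the ribbon of $e_0$ lands on the side of $e_0$ at $v_1$ used by $C_i$, which is adjacent to precisely the corner $\{e_0, e_1\}$; iterating follows the edges of $C_i$ in order and closes back at $v_0$ after exactly $n$ steps, since each walk $C_i$ visits each vertex at most once in a cubic graph (a second visit would require a fourth edge-incidence at $v$, contradicting \textbf{C1}). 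The remaining boundary components of the ribbon graph become the ``extra'' facial walks and may freely contain singular edges. I expect the main obstacle to be articulating the angle-to-corner correspondence cleanly enough to justify both the consistency of $\lambda$ and the boundary-tracing step; once that bookkeeping is settled, the linear-time bound is immediate from one sweep through the walks to record used angles followed by one sweep through the edges to compute $\lambda$.
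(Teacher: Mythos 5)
Your proposal is correct and takes essentially the same route as the paper: the paper's $\lambda(e)$ from its four-case table is exactly your product of two local signs recording which corner the walk occupies at each endpoint of $e$, with well-definedness argued the same way from \textbf{C1} and \textbf{C2} (the two walks through $e$ occupy opposite corners at each end, so both signs flip), and its construction of $\pi_v$ by extending the angle-graph $D_v$ to an oriented triangle amounts, at a cubic vertex, to your observation that the rotation may be fixed arbitrarily. Your explicit face-tracing check that each $C_i$ is a facial walk is a detail the paper leaves implicit, but it is the same argument in ribbon-graph language.
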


    \begin{proof}
        Let $C_1, C_2, \dots, C_t$ be a partial CDC. To extend $C_1, C_2, \dots, C_t$ to an embedding, we 
        proceed as follows. For every $v \in V(G)$ we create a graph~$D_v$ with $V(D_v) = \delta(v)$,
        i.e., vertices of $D_v$ are the three edges incident to the vertex $v$. For every $i$ and every 
        $e,f \ni v$ if $C_i$ contains $e,v,f$ then we add the edge $\{e,f\}$ to $D_v$. Since $G$ is cubic, condition (\hyperref[C1]{C1}) implies that $\Delta(D_v)\leq 2$, and (\hyperref[C2]{C2}) implies that it has no double edges. 
        Now, for every $v\in V(G)$ we extend $D_v$ to a circuit and fix an orientation 
        $\overrightarrow{D_v}$ of it. If $(e,f)$ is an arc of $\overrightarrow{D_v}$, we define 
        $\pi_v(e) = f$. To create an embedding $(\pi, \lambda)$, we will define $\lambda$ as 
        follows. For every $e\in E(G)$ if $e$ is not used by any $C_i$, define $\lambda(e) = 1$. 
        Otherwise, let $C_i$ contain $f,u,e,v,g$ where $f,e,g \in E(G)$ and $u,v \in V(G)$. 
        \[\lambda(e) = 
            \begin{cases} 
                1 & \text{if}~~~   \pi_u(f) = e ~~ \& ~~ \pi_v(e) = g \\

                -1 & \text{if}~~~ \pi_u(f) = e ~~ \& ~~ \pi^{-1}_v(e) = g \\

                -1 & \text{if}~~~  \pi^{-1}_u(f) = e ~~ \& ~~ \pi_v(e) = g \\

                1 & \text{if}~~~ \pi^{-1}_u(f) = e ~~ \& ~~ \pi^{-1}_v(e) = g
            \end{cases}
        \]
        It remains to check that $\lambda$ is well defined. We need to show that if
        $e \in E(C_i) \cap E(C_j)$, then $C_i$ and $C_j$ define the same $\lambda(e)$. This is easy to check. Note that comparing $C_i$ with $C_j$ gives two changes. For example, if in $C_i$ we have $\pi_u(f) = e$ and $\pi^{-1}_v(e) = g$ then $\lambda(e) = -1$. Now, let $\delta(u) = \{e, f, f'\}$, $\delta(v) = \{e, g, g'\}$. Since $C_1, C_2, \dots, C_t$ is a partial CDC, by conditions (\hyperref[C1]{C1}) and (\hyperref[C2]{C2}), the walk $C_j$ must contain $f',u,e,v,g'$ with $\pi^{-1}_u(f') = e$ and $\pi_v(e) = g'$ (or alternatively, $C_j$ may be visited in the other direction as $g', v, e, u, f'$ i.e. $\pi_u(e) = f'$ and $\pi^{-1}_v(g') = e$) which gives $\lambda(e) = -1$ (See \hyperref[MainLemma]{Figure~2}). Similarly, the remaining three cases of the defined $\lambda(e)$ by $C_i$ give the same $\lambda(e)$ in $C_j$.

        The algorithm proceeds as the proof above: First, we read the description of all of the cycles $C_1$, \dots, $C_t$ and 
        construct the graphs $D_v$ along the way. Then, we extend each $D_v$ to a $3$-circuit and orient it (constant time for each~$v$), 
        which defines $\pi$. 
        Finally, we read the descriptions of the cycles again and define~$\lambda$ by the formula above. 
    \end{proof}
    \begin{figure}\label{MainLemma}
        \begin{center}
            \includegraphics[scale=.4]{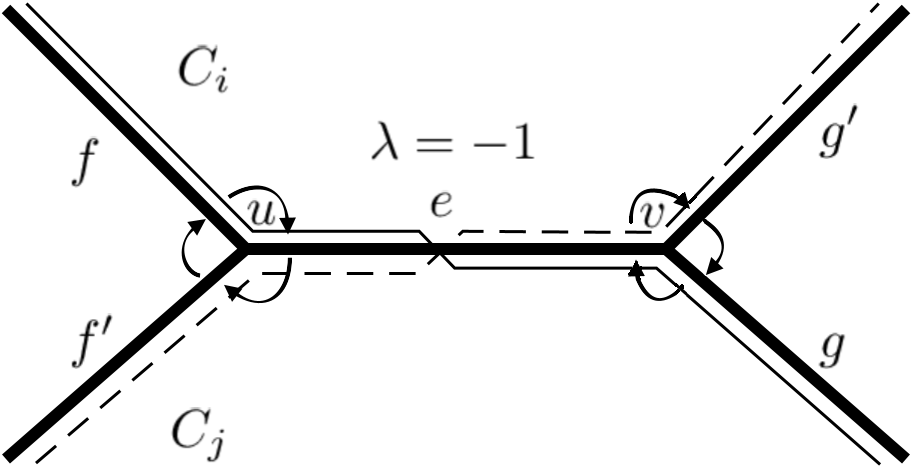}
        \end{center}
        \caption{$C_i$ represented by a narrow line and $C_j$
        represented by a dashed line. $f,u,e,v,g \in C_i$ and $f',u,e,v,g'\in C_j$}
    \end{figure}

The following known observation shows that not every partial CDC can be extended to a CDC.
We will prove it by using the fact that no angle is used twice in a CDC.
\begin{observation}\label{CDC-petersen}
    A 2-factor in Petersen graph consisting of two $5$-circuits forms a partial CDC that can not be extended to a CDC.
\end{observation}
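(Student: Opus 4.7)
The plan is to first verify that the two 5-circuits form a partial CDC, and then derive a contradiction from assuming an extension to a CDC exists. Label the outer vertices $u_0,\dots,u_4$ with outer edges $e_i=u_iu_{i+1}$, the inner vertices $v_0,\dots,v_4$ with the five pentagram edges, and the spokes $s_i=u_iv_i$ (indices mod~$5$). The outer pentagon $P_O$ and inner pentagon $P_I$ together cover the outer and inner edges once each and leave the spokes uncovered; the only angles they use are $\{e_{i-1},e_i\}$ at each $u_i$ (by $P_O$) and the corresponding pentagram angle at each $v_i$ (by $P_I$), so conditions C1 and C2 are immediate.

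Now suppose additional walks $W_1,\dots,W_t$ complete $\{P_O,P_I\}$ to a CDC. At each $u_i$ the extension contributes four half-edges --- one on $e_{i-1}$, one on $e_i$, and two on $s_i$ --- grouped into two angles. The angle $\{e_{i-1},e_i\}$ is already used by $P_O$ and is therefore forbidden by C2, and pairing the two spoke-ends at $u_i$ would force some single walk to cover $s_i$ twice, which C1 forbids. Hence the two angles used at $u_i$ are forced to be $\{e_{i-1},s_i\}$ and $\{e_i,s_i\}$. An identical argument at each $v_i$ forces the two extension angles there to be the two spoke-inner angles.

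To finish, contract each spoke $s_i$ to a single vertex $w_i$. The resulting quotient is $K_5$, because each $w_i$ retains the two outer edges formerly at $u_i$ and the two inner edges formerly at $v_i$, one to each other vertex. The walks $W_1,\dots,W_t$ project to closed walks covering every edge of $K_5$ exactly once. Since each spoke $s_i$ is covered by two distinct walks, no $W_j$ can visit $w_i$ twice, so each $W_j$ projects to a simple cycle of $K_5$. Moreover, the angle analysis above forces such a cycle, at every vertex it visits, to use one outer edge and one inner edge, so the cycle alternates outer/inner around its length and therefore has even length.

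The final step is an arithmetic obstruction. Simple cycles in $K_5$ have length $3$, $4$, or $5$, so each $W_j$ must project to a $4$-cycle. But ten edges cannot be partitioned into edge-disjoint $4$-cycles since $4\nmid 10$, yielding the required contradiction. I expect the main obstacle to be spotting the contraction-to-$K_5$ that turns the problem into a clean cycle-decomposition question; after that the alternating-parity observation and the divisibility are both immediate.
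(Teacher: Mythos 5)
Your proof is correct and takes essentially the same route as the paper's: the forced angles at each vertex make every additional walk alternate between the two pentagons through the spokes, and a divisibility count yields the contradiction. Your contraction of the spokes to $K_5$ (forcing $4$-cycles and using $4 \nmid 10$) is a clean repackaging of the paper's direct argument that each new face must be an $8$-circuit while only $20$ edge-incidences remain to be covered ($8 \nmid 20$).
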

    \begin{proof}
        We have two $5$-circuits $C_1$ and $C_2$ and a perfect matching $M$ between them. 
        No angle in a CDC is used twice. Since $C_1$ and $C_2$ are disjoint, they satisfy 
        conditions (\hyperref[C1]{C1}) and (\hyperref[C2]{C2}). Therefore, $C_1$ and 
        $C_2$ forms a partial CDC. Now, we show that in any extension of $C_1$ and $C_2$ 
        to an FDC $\mathcal{F}$, there exists at least one singular edge. If there is no singular 
        edge, then every facial walk must be a circuit. Note that $C_1$ and $C_2$ cover $10$ edges together. 
        Since every edge is covered twice in a CDC, the remaining facial walks must cover $20$ edges. Let 
        $F$ be a facial walk in an extension $\mathcal{F}$ of $C_1$ and $C_2$ to an FDC. Every angle that is not used by $C_1$ and 
        $C_2$, contains an edge from $M$. So, the edges of $F$ alternate between $C_1$, 
        $M$, $C_2$, $M$, $C_1$, $\dots$ Therefore, the length of $F$ is divisible by $4$. 
        Since there is no $4$-circuit and $12$-circuit in Petersen, the length of $F$ must be 
        $8$. But it is not possible to cover exactly $20$ edges with $8$-circuits, a contradiction.
    \end{proof}

\begin{lemma}
For every FDC of an embedding of a graph $G$, each angle is part of some facial walk.
\end{lemma}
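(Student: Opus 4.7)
The plan is to exhibit, for each angle $\{e,f\}$ at a vertex $v$, a facial walk in the FDC of which it is a part. By the definition of a $\pi$-angle, there exists $\delta\in\{-1,+1\}$ with $\pi_v^{\delta}(e)=f$. I will start a facial walk with $e_0=e$ oriented so that $v_1=v$ (that is, with $v_0$ the other endpoint of $e$), and pick the starting side $\varepsilon\in\{-1,+1\}$ granted by \hyperref[obs:face_exists]{Observation~\ref{obs:face_exists}} so that $\mu_1=\delta$.

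By \hyperref[def4]{Definition~\ref{def4}}, the next edge of this walk is then $e_1=\pi_{v_1}^{\mu_1}(e_0)=\pi_v^{\delta}(e)=f$. Hence the walk begins $(v_0,e,v,f,\dots)$, so the angle $\{e,f\}$ appears at position~$1$, and by \hyperref[obs:face_exists]{Observation~\ref{obs:face_exists}} this walk is a member of the FDC (up to cyclic shift and reversal), as required.

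The only real subtlety I anticipate is justifying that the starting side $\varepsilon$ can indeed be chosen so as to impose the desired value of $\mu_1$. Although the formula $\mu_i=\varepsilon\lambda(e_0)\lambda(e_1)\cdots\lambda(e_i)$ superficially involves edges not yet determined at step~$1$, flipping $\varepsilon$ flips every $\mu_i$ simultaneously, so both possible values of $\mu_1$ are attainable; this is the only place where one has to look past the notation of \hyperref[def4]{Definition~\ref{def4}}. Once this point is addressed, the constructive argument above finishes the proof in a single step.
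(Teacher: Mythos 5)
Your proposal is correct and takes essentially the same route as the paper: start the facial walk along $e$ entering $v$, choose the starting side $\varepsilon$ so that the traversal rule of Definition~\ref{def4} yields $e_1=\pi_v^{\pm 1}(e)=f$, and invoke Observation~\ref{obs:face_exists} to conclude this walk belongs to the FDC. The paper simply assumes $\pi_v(e)=f$ without loss of generality and takes $\varepsilon=\lambda(e)$, which is the same side-selection argument you spell out.
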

    \begin{proof}
        Let $\{e, f\}$ be a $\pi$-angle in an FDC where $e\cap f = \{v\}$. By definition of $\pi$-angle, we have $\pi_{v}(e) = f$ or $\pi_{v}(f) = e$. Assume that $\pi_{v}(e) = f$ and let $e = (u,v)$ and $f = (v, w)$. Then $F = (u, e, v, f, w, \dots, u)$ with $\epsilon = \lambda(e)$ is a facial walk in this FDC by
        ~\hyperref[obs:face_exists]{Observation \ref{obs:face_exists}}.
    \end{proof}

\iffalse
\begin{corollary}\label{cor1}
The number of singular edges connected to a vertex $v$ is either $0$, $1$, or~$3$.
\end{corollary}

\begin{proof}
    Let $e_1$, $e_2$, and $e_3$ be the edges incident with vertex $v$ and let 
    $\{e_1,e_2\}$ be angle 1, $\{e_2,e_3\}$ be angle 2, and $\{e_1,e_3\}$ be angle 3 around $v$. Let $F_i$ be the facial walk containing the angle $i$ for $i = 1,2,3$. If $F_1 \neq F_2 \neq F_3$, then every edge $e_i$ is covered twice with different facial walks, with no singular edges. If $F_1 = F_2 \neq F_3$, then $e_2$ is covered twice by $F_1$ and therefore is a singular edge. $e_1$ and $e_3$ are both covered by $F_1$ and $F_3$.
    And finally, if $F_1 = F_2 = F_3$, then $e_1$, $e_2$, and $e_3$ are covered with one facial walk and are all singular edges. By symmetry, these cases are all possibilities.
\end{proof}
\fi 
%\begin{corollary}
%$G[\{ e \in E(G) : e~\text{is regular} \}]$ is a $(2,3)$-regular %graph. 
%\end{corollary}

We use the notation of the book Combinatorial Optimization \cite{Schrijver} to denote perfect matching polytopes. 
In an undirected graph $G = (V, E)$ and $U\subseteq V$, we use $\delta(U)$ to denote the set of edges leaving $U$.  
For any set $Y$, we identify the function $x: Y \longrightarrow \mathbb{R}$ with the vectors $x\in \mathbb{R}^Y$. Equivalently, we denote $x(v)$ by $x_v$. For $U \subseteq Y$, we denote
$x(U):=\sum\limits_{v\in U}^{}x_v$. For a matching~$M$, we use $\chi_M$ for the incidence vector of $M$ in $\mathbb{R}^E$. Then $\mathcal{P}_{PM}(G)$, the perfect matching polytope of a graph~$G$,  is defined as follows
$$ \mathcal{P}_{PM}(G) = \conv\{ \chi_M : M~\text{is a perfect matching in}~G \}.       $$

\begin{theorem}[Edmonds, \cite{Edmonds1965MaximumMA}] \label{Edmonds}

\[
\mathcal{P}_{PM}(G) =
   \begin{Bmatrix*}[l]
                        &x\geq 0,\\
    x\in \mathbb{R}^E : &\forall v\in V:~ x(\delta(v)) = 1,\\
                        &\forall U\subseteq V: ~|U| ~ \text{is odd}~\Rightarrow 
                        x(\delta(U)) \geq 1
   \end{Bmatrix*}
\]

\end{theorem}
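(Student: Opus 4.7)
The plan is to prove the two inclusions separately. Denote the right-hand polyhedron by $\mathcal{Q}(G)$. The inclusion $\mathcal{P}_{PM}(G)\subseteq\mathcal{Q}(G)$ is immediate: for any perfect matching $M$, the vector $\chi_M$ is nonnegative, satisfies $\chi_M(\delta(v))=1$ at every vertex, and for any odd $U$ at least one edge of $M$ must cross $\delta(U)$ by a parity count on the vertices of $U$. The entire content of the theorem is the reverse inclusion $\mathcal{Q}(G)\subseteq\mathcal{P}_{PM}(G)$, for which it suffices to show that every vertex (extreme point) $x^*$ of $\mathcal{Q}(G)$ has all coordinates in $\{0,1\}$, since then $x^*$ is itself the incidence vector of a perfect matching.

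I would argue by induction on $|V(G)|+|E(G)|$, considering a minimal counterexample: a graph $G$ together with a fractional vertex $x^*$ of $\mathcal{Q}(G)$. A standard reduction shows that $0<x^*_e<1$ for every edge $e$: if $x^*_e=0$ we may delete $e$, and if $x^*_e=1$ we may delete both endpoints of $e$ (and all edges incident to them); in both cases $x^*$ restricts to a fractional vertex of the polyhedron associated to the smaller graph, contradicting minimality. Since $x^*$ is a vertex of a polyhedron in $\mathbb{R}^{E(G)}$, some $|E(G)|$ linearly independent tight constraints must hold at $x^*$. No non-negativity is tight, and the $|V(G)|$ degree equalities alone cannot span that many constraints as soon as $|E(G)|>|V(G)|$, so at least one tight odd-set constraint $x^*(\delta(U))=1$ must exist with $3\le|U|\le|V(G)|-3$ (the extreme sizes $|U|\in\{1,|V(G)|-1\}$ being redundant with degree equalities).

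The main step is then a contraction and reassembly argument. Let $G_1=G/\overline{U}$ and $G_2=G/U$, and let $x^{(i)}$ denote the natural restriction of $x^*$ to $E(G_i)$. Tightness of $x^*(\delta(U))=1$ together with the original degree equalities guarantees that each $x^{(i)}$ satisfies all degree equalities of $G_i$ (in particular at the newly contracted vertex), and the odd-set inequalities in $G_i$ are inherited from those in $G$ by a parity argument using that both $|U|$ and $|\overline{U}|$ are odd. By the induction hypothesis, each $x^{(i)}$ lies in $\mathcal{P}_{PM}(G_i)$ and so is a convex combination of incidence vectors of perfect matchings of $G_i$. The delicate point — and in my view the main obstacle — is to glue these two decompositions into a single convex decomposition of $x^*$ in $\mathcal{P}_{PM}(G)$: one must pair a perfect matching $M_1$ of $G_1$ with a perfect matching $M_2$ of $G_2$ only when they use the same edge of $\delta(U)$, and then match the coefficients of the two distributions by a common-refinement / bipartite flow argument on the boundary patterns, so that for every $e\in\delta(U)$ the total mass coming from matchings using $e$ agrees in both decompositions and equals $x^*_e$. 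Once this combinatorial gluing is carried out, $x^*$ is exhibited as a proper convex combination of perfect matchings of $G$, contradicting its being a vertex with a fractional coordinate and completing the induction.
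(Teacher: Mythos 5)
The paper offers no proof of this statement to compare against: it is Edmonds' perfect matching polytope theorem, quoted from the literature and used as a black box (it enters only through Theorem~\ref{thm18} and the membership argument in the proof of Theorem~\ref{thm6}). What you have written is the classical contraction proof (the one in Schrijver's book, not Edmonds' original algorithmic derivation via the blossom algorithm), and as an outline it is essentially correct: the easy inclusion, the reduction to a putative fractional vertex with $0<x^*_e<1$, the counting of tight constraints to produce a tight odd cut $U$ with $3\le |U|\le |V(G)|-3$, the two contractions $G/U$ and $G/\overline{U}$, and the recombination into a convex decomposition of $x^*$.

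Two places need to be firmed up before this is a complete proof. First, your dimension count tacitly assumes $|E(G)|>|V(G)|$. From $0<x^*_e<1$ and the degree equalities you only get minimum degree at least $2$, hence $|E(G)|\ge |V(G)|$, and the $2$-regular case must be disposed of separately: a component that is an odd circuit makes the polyhedron empty (take $U$ equal to that component, so $x(\delta(U))=0$), and for a disjoint union of even circuits every vertex of the polyhedron is already integral, so no fractional vertex arises there. Second, the gluing you single out as the main obstacle is standard and should be carried out rather than described: a vertex of a rational polyhedron is rational, so both decompositions $x^{(1)}$ and $x^{(2)}$ can be written as averages of $k$ perfect matchings (with repetitions) for one common denominator $k$; every perfect matching of $G_i$ uses exactly one edge of $\delta(U)$, because the contracted vertex must be covered exactly once, so in each of the two lists exactly $k\,x^*_e$ matchings use a given $e\in\delta(U)$; hence the lists can be paired so that paired matchings share their boundary edge, and each pair unites into a perfect matching of $G$. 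This yields $x^*$ as a convex combination of incidence vectors of perfect matchings of $G$, contradicting that $x^*$ is a fractional extreme point. With these two points filled in, your argument is sound.
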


In the next section, we use Edmonds' theorem to get upper bounds on the minimum number of singular edges in the embedding of bridgeless cubic graphs.

\section{Bounds on the minimum number of singular edges using perfect matchings}\label{sec3}

%\begin{lemma}\label{obs16}
%If $C_1, C_2, \dots, C_t$ is a partial circuit double cover for a bridgeless cubic graph $G$, then we can extend it to a facial double cover, i.e., a collection of facial walks such that each edge is covered twice.
%\end{lemma}

By \hyperref[lem7]{Lemma \ref{lem7}}, every partial CDC $C_1, C_2, \dots,$ $C_t$ can be extended to an FDC. Since in a partial CDC, each edge is either covered once by one facial walk or twice by two facial walks, the edges that are already covered can not be singular. Therefore, potential singular edges of the FDC are those that are not covered by $C_1, C_2, \dots,$ $C_t$. We will use this observation repeatedly.

\begin{theorem}\label{obs17}
Let $G$ be a bridgeless cubic graph. There exists an embedding of $G$ with at most $\frac{n}{2}$ singular edges.
Moreover, such an embedding can be found in time $O(n \log^2 n)$. 
\end{theorem}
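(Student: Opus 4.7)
The plan is to exploit Petersen's theorem together with \hyperref[lem7]{Lemma~\ref{lem7}}. Since $G$ is a bridgeless cubic graph, it admits a perfect matching $M$. Its complement $F = E(G) \setminus M$ is a $2$-factor, hence a vertex-disjoint union of circuits $C_1, C_2, \dots, C_k$. I would use these circuits as the partial CDC.

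Next I would verify that $\{C_1, \dots, C_k\}$ satisfies conditions \hyperref[C1]{C1} and \hyperref[C2]{C2}. For \hyperref[C1]{C1}, the circuits are pairwise edge-disjoint and each covers each of its own edges exactly once, so every edge of $G$ is covered either once (edges in $F$) or zero times (edges in $M$). For \hyperref[C2]{C2}, at every vertex $v$ the unique circuit $C_i$ through $v$ uses only the single angle formed by the two $F$-edges at $v$; since the circuits are vertex-disjoint, no angle is used by two distinct $C_i$'s or twice by any one $C_i$. Thus $C_1, \dots, C_k$ form a partial CDC.

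Applying \hyperref[lem7]{Lemma~\ref{lem7}}, I obtain an embedding $(\pi,\lambda)$ of $G$ in which each $C_i$ is a facial walk. As observed at the start of this section, every edge used by the partial CDC is regular in the resulting FDC. The edges used are precisely those of $F$, so the singular edges must lie in $M$, giving at most $|M| = n/2$ singular edges.

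The main obstacle is the complexity claim: the construction of \hyperref[lem7]{Lemma~\ref{lem7}} is linear, so the dominating cost is producing the matching $M$. A generic maximum matching algorithm is too slow; however, for bridgeless cubic graphs a perfect matching can be found in $O(n \log^2 n)$ time by known specialized algorithms (building on the Biedl--Bose--Demaine--Lubiw approach). Quoting this as a black box, the whole procedure runs in $O(n \log^2 n)$ time, completing the proof.
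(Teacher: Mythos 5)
Your proposal is correct and follows essentially the same route as the paper: take any perfect matching $M$, use the circuits of the $2$-factor $G-M$ as a partial CDC, extend via Lemma~\ref{lem7}, and bound the singular edges by $|M|=n/2$, with the $O(n\log^2 n)$ cost coming from a specialized perfect-matching algorithm for bridgeless cubic graphs. Your explicit check of conditions C1 and C2 is a detail the paper leaves implicit, but the argument is the same.
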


\begin{proof}
    Let us start with any perfect matching $M$. (This can be found in time $O(n \log^2 n)$, see \cite{Diks}.) 
    Then, $G-M$ is a cycle, that is, a collection of circuits. By \hyperref[lem7]{Lemma \ref{lem7}}, this collection can be extended to a double cover for $G$ in linear time. Therefore, the resulting embedding has at most $|M| = \frac{n}{2}$ singular edges. 
\end{proof}

The following theorem of Kaiser, Kr\'{a}l, and Norine is proved using \hyperref[Edmonds]{Theorem~
\ref{Edmonds}}. This result is formulated only as existence of matchings $M_1$ and $M_2$. However, 
it is proved as formulated below.
We will use it to improve upon \hyperref[obs17]{Theorem \ref{obs17}}.

\begin{theorem}[\cite{Kaiser}]\label{thm18}
Let $G$ be a bridgeless cubic graph, and let $M_1$ be a perfect matching in $G$ that contains no $3$-edge-cuts in $G$. 
Then, there exists a perfect matching $M_2$ such that $|M_1 \cup M_2| \geq \frac{3}{5}|E(G)|$.
\end{theorem}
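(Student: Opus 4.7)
The plan is to realise $M_2$ as one atom of a convex decomposition in the perfect matching polytope, using Theorem~\ref{Edmonds}. I would define $x \in \mathbb{R}^E$ by
\[
x_e = \begin{cases} 1/5 & \text{if } e \in M_1, \\ 2/5 & \text{if } e \notin M_1, \end{cases}
\]
and try to show that $x \in \mathcal{P}_{PM}(G)$. Nonnegativity is clear, and the vertex constraint is immediate from cubicity: each $v$ is incident with exactly one $M_1$-edge and two non-$M_1$-edges, so $x(\delta(v)) = 1/5 + 2\cdot 2/5 = 1$.

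The main step will be verifying $x(\delta(U)) \geq 1$ for every odd $U \subseteq V$. For $F = \delta(U)$ I would use two standard parity observations available in a cubic graph: $|F|$ has the same parity as $|U|$ (so $|F|$ is odd), and any perfect matching intersects $F$ in an odd number of edges. Bridgelessness then forces $|F| \geq 3$. Writing $k = |M_1 \cap F|$, a one-line calculation gives $x(F) = (2|F|-k)/5$. For $|F| \geq 5$, the crude bound $k \leq |F|$ yields $x(F) \geq |F|/5 \geq 1$. For $|F| = 3$, parity restricts $k$ to $\{1,3\}$, and the hypothesis that $M_1$ contains no $3$-edge-cut rules out $k = 3$; hence $k = 1$ and $x(F) = 1$ exactly.

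Once $x \in \mathcal{P}_{PM}(G)$, Edmonds' theorem provides a convex decomposition $x = \sum_i \lambda_i \chi_{N_i}$ over perfect matchings $N_i$ with $\lambda_i \geq 0$, $\sum_i \lambda_i = 1$. Summing the coordinates indexed by $M_1$ gives
\[
\sum_i \lambda_i |M_1 \cap N_i| \;=\; x(M_1) \;=\; \tfrac{1}{5}\cdot\tfrac{n}{2} \;=\; \tfrac{n}{10}.
\]
By an averaging argument, some $N_i$ — which I take as $M_2$ — satisfies $|M_1 \cap M_2| \leq n/10$. Since $|M_1| = |M_2| = n/2$, this gives $|M_1 \cup M_2| = n - |M_1 \cap M_2| \geq 9n/10 = \tfrac{3}{5}|E(G)|$, closing the proof.

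The only delicate point is the $|F|=3$ case of the odd-cut constraint, where the inequality is tight: that is exactly where the hypothesis on $M_1$ is needed, and the parity observation is the mechanism that upgrades the qualitative assumption "no $3$-edge-cut is a subset of $M_1$" into the sharp quantitative equality $|M_1 \cap F| = 1$ that the numerical choice $(1/5,2/5)$ demands.
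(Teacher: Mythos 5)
Your proof is correct: the polytope point with weights $1/5$ on $M_1$ and $2/5$ elsewhere, the parity argument showing odd cuts have size at least $3$ and meet $M_1$ an odd number of times, and the averaging over the convex decomposition from Edmonds' theorem all go through, with the hypothesis that $M_1$ contains no $3$-edge-cut invoked exactly where the constraint is tight. This is essentially the same approach the paper relies on: it cites Kaiser--Kr\'al--Norine for Theorem~\ref{thm18} rather than proving it (noting only that the proof uses Theorem~\ref{Edmonds}), and its own Theorem~\ref{thm6} runs the identical argument with weights $\frac{1}{k}$ and $\frac{k-1}{2k}$, of which your vector is the $k=5$ instance with cyclic connectivity replaced by the no-$3$-cut condition in the tight case.
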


We reformulate and extend this theorem as below.

\begin{corollary}\label{cl18}
Let $G$ be a bridgeless cubic graph, and let $M_1$ be a perfect matching in $G$ that contains no $3$-edge-cuts in $G$. Then, there exists a perfect matching $M_2$ in~$G$ with $|M_1 \cap M_2| \leq \frac{n}{10}$. Moreover, such $M_2$ can be found in time $O(n^\frac{3}{2}\log n)$. 
\end{corollary}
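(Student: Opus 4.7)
The plan is to read the existence statement straight off Theorem~\ref{thm18} by inclusion-exclusion. Since $G$ is cubic on $n$ vertices we have $|E(G)|=3n/2$ and every perfect matching uses exactly $n/2$ edges, so the bound $|M_1\cup M_2|\geq \frac{3}{5}|E(G)|=\frac{9n}{10}$ translates, via $|M_1\cap M_2|=|M_1|+|M_2|-|M_1\cup M_2|$, into $|M_1\cap M_2|\leq n/10$ without any extra work. This settles the existence half of the corollary.

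For the algorithmic claim, I would reformulate the task as a minimum-weight perfect matching problem in $G$: assign weight $1$ to every edge of $M_1$ and weight $0$ to every other edge, so that the total weight of any perfect matching $M_2$ equals $|M_1\cap M_2|$. The existence part just established guarantees that the optimum of this $0/1$ weighted instance is at most $n/10$, so any exact minimum-weight perfect matching solver applied to the weighted copy of $G$ returns an $M_2$ with the desired property.

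The main obstacle is hitting the running time $O(n^{3/2}\log n)$. For a cubic graph we have $m=3n/2=O(n)$ edges, and the weights lie in $\{0,1\}$, so this falls into the regime of scaling-based minimum-weight perfect matching algorithms in general (non-bipartite) graphs, for which bounds of the form $O(m\sqrt{n}\,\log(nW))$ are known (Gabow--Tarjan, and later Duan--Pettie); with $m=O(n)$ and $W=1$ this evaluates to exactly $O(n^{3/2}\log n)$. The only delicate point is to cite a result that covers general graphs with integer weights, rather than just the bipartite case where such complexities are more classical; once the right black box is in place, everything else in the corollary is essentially arithmetic from Theorem~\ref{thm18}.
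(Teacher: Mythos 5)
Your proposal is correct and matches the paper's proof: the same inclusion--exclusion computation from Theorem~\ref{thm18} gives the bound $|M_1\cap M_2|\leq n/10$, and the same reduction to minimum-weight perfect matching with weights $\chi_{M_1}$, solved by the Duan--Pettie-type scaling algorithm, yields the $O(n^{3/2}\log n)$ running time.
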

    \begin{proof}
        By \hyperref[thm18]{Theorem \ref{thm18}}, there exists a perfect matching $M_2$ such that 
        $$|M_1 \cup M_2| \geq \frac{3}{5}|E(G)| = \frac{3}{5}\cdot\frac{3n}{2} = \frac{9n}{10}.$$ 
        On the other hand, 
        $$|M_1 \cup M_2| = |M_1| + |M_2| - |M_1 \cap M_2| = n - |M_1 \cap M_2|.$$
        Therefore, 
        $$|M_1 \cap M_2| \leq n - \frac{9n}{10} = \frac{n}{10}.$$
        This proves the existence of $M_2$. To find it efficiently, we use the algorithm of \cite{Duan2018} to solve 
        minimum weight perfect matching (the weights being~$\chi_{M_1}$) in time $O(n^\frac{3}{2}\log n)$.
    \end{proof}

\begin{theorem}\label{obs19}
Let $G$ be a bridgeless cubic graph. There exists an embedding of $G$ with at most $\frac{n}{10}$ singular edges. Moreover, such an embedding can be found in time $O(n^3)$.
\end{theorem}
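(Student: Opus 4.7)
The plan is to strengthen the proof of \hyperref[obs17]{Theorem~\ref{obs17}} by adding a second layer of closed walks that recover most of the matching edges that were left uncovered there. First I would find a perfect matching $M_1$ of $G$ that contains no 3-edge-cut, and then apply \hyperref[cl18]{Corollary~\ref{cl18}} to obtain a perfect matching $M_2$ with $|M_1 \cap M_2| \le n/10$. The goal is a partial CDC whose uncovered edge set is exactly $M_1 \cap M_2$; \hyperref[lem7]{Lemma~\ref{lem7}} then produces an embedding with at most $n/10$ singular edges, because every edge covered by a partial CDC is regular in the resulting extension.

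The candidate partial CDC has two ingredients: the components of the 2-factor $F_1 := G - M_1$, which are disjoint circuits covering every edge outside $M_1$ exactly once; and the even circuits $D_1,\dots,D_k$ of the symmetric difference $M_1 \triangle M_2$, which alternate between $M_1$- and $M_2$-edges and together cover every edge of $M_1 \triangle M_2$ exactly once. A quick audit by edge type shows that only the edges in $M_1 \cap M_2$ remain uncovered, and no edge is covered more than twice, so \hyperref[C1]{C1} holds. For \hyperref[C2]{C2} I would argue vertex by vertex: at a vertex in $V(M_1 \cap M_2)$ only the $F_1$-circuit passes through, so there is no collision; at any other vertex $v$, both an $F_1$-circuit and a unique $D_j$ pass through, but the $F_1$-angle consists of the two non-$M_1$ edges at $v$, whereas the $D_j$-angle pairs the $M_1$-edge at $v$ with the $M_2$-edge at $v$, so the two angles are distinct.

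I expect the main obstacle to be producing $M_1$ efficiently, since \hyperref[cl18]{Corollary~\ref{cl18}} already assumes $M_1$ avoids every 3-edge-cut. I would handle this by reducing to the cyclically 4-edge-connected case: trivial 3-edge-cuts (around a single vertex) can never be fully contained in a perfect matching, and no non-trivial 3-edge-cut exists in a cyclically 4-edge-connected graph. Decomposing $G$ along its non-trivial 3-edge-cuts, invoking a standard matching subroutine on each piece, and recombining should yield $M_1$ in $O(n^3)$ time. This dominates the $O(n^{3/2}\log n)$ cost of \hyperref[cl18]{Corollary~\ref{cl18}} and the linear-time partial CDC construction plus \hyperref[lem7]{Lemma~\ref{lem7}}, giving the claimed overall running time.
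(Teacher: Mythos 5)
Your construction is essentially the paper's own proof. The partial CDC you build---the circuits of $G-M_1$ together with the circuits of $M_1\triangle M_2$---is exactly the pair $C_1=G-M_1$, $C_2=(M_1\cup M_2)-(M_1\cap M_2)$ used in the paper, your vertex-by-vertex verification of \hyperref[C2]{C2} is the same two-case analysis, and the bound $|M_1\cap M_2|\le \frac{n}{10}$ is obtained from the same source (\hyperref[thm18]{Theorem~\ref{thm18}} via \hyperref[cl18]{Corollary~\ref{cl18}}), followed by the same application of \hyperref[lem7]{Lemma~\ref{lem7}}. The one place you diverge is the production of $M_1$: the paper simply cites Boyd et al.~\cite{Boyd} for an $O(n^3)$ algorithm that finds a perfect matching meeting every $3$-edge-cut in exactly one edge, whereas you sketch a recursive decomposition along non-trivial $3$-edge-cuts. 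That sketch is the standard existence argument, but as stated it leaves genuine work: non-trivial $3$-edge-cuts may cross, so ``decomposing along its non-trivial $3$-edge-cuts'' must be organized one cut at a time; when recombining, the matchings on the two pieces must use the \emph{same} cut edge, so on one side you need a perfect matching through a prescribed edge; and you must then argue that the glued matching avoids every $3$-cut of $G$, including cuts that cross the cut you split along, together with a running-time analysis of the recursion. If you do not wish to supply these details, cite \cite{Boyd} as the paper does; the rest of your argument is correct as written.
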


\begin{proof}
    Let $M_1$ and $M_2$ be two perfect matchings that are provided by \hyperref[thm18]{Theorem~\ref{thm18}}. Let $C_1 = G - M_1$ and $C_2 = (M_1 \cup M_2) - (M_1 \cap M_2)$.
    Next, we will show that $C_1$, $C_2$ is a partial CDC. Each edge of $M_2 \backslash M_1$ is used by both $C_1$ and $C_2$. Each edge 
    of $M_1 \backslash M_2$ is used by $C_2$, and each edge of $E(G)\backslash (M_1 \cup M_2)$ is used by $C_1$. The remaining edges are not used by any of $C_1$ or $C_2$. Therefore, each edge is covered at most twice by $C_1$ and $C_2$. To prove that no angle is used twice by $C_1$ and $C_2$, 
    let $e, f, g$ be three edges incident with a vertex $v$. Since $v$ is covered by an edge in both $M_1$ and $M_2$, we have two following cases 

    \begin{enumerate}
        \item\label{case1} One of $e$, $f$, and $g$, let say $e$, is covered by both of $M_1$ and $M_2$. In this case, angle $\{f, g\}$ is used by $C_1$, and other angles are not used (see \hyperref[fig]{Fig. \ref{fig}}).
        \item\label{case2}Two of $e$, $f$, and $g$ are covered, one by $M_1$ and the other by $M_2$. Let $e \in M_1$ and $f \in M_2$. In this case, angle $\{e, g\}$ is not used by any of $C_1$ and $C_2$. Angle $\{e, f\}$ is used by $C_2$, and angle $\{f, g\}$ is used by $C_1$. Since $g \notin E(C_2)$, angle $\{f, g\}$ is not used by $C_2$, and since $e \notin E(C_1)$, angle $\{e, f\}$ is not used by $C_1$ 
        (see \hyperref[fig]{Fig. \ref{fig}}).
        \begin{figure}\label{fig}
            \begin{center}
                \includegraphics[scale=.2]{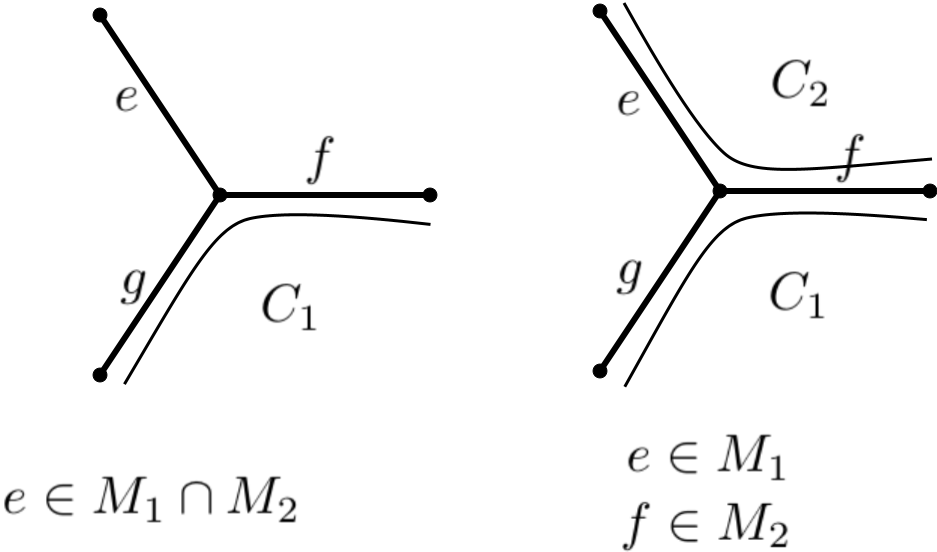}
            \end{center}
            \caption{Case \ref{case1} (left picture) and case \ref{case2} (right picture) in the proof of \hyperref[obs19]{Theorem~\ref{obs19}}.}
        \end{figure}
    \end{enumerate}
    Therefore, no angle is used twice, and the collection of circuits forming $C_1$ and $C_2$ is a partial CDC.  By \hyperref[lem7]{Lemma~\ref{lem7}} and 
    \hyperref[cl18]{Corollary~\ref{cl18}}, the resulting embedding from extending $C_1$ and $C_2$ has at 
    most $|E(G) \backslash (C_1 \cup C_2)| = |M_1 \cap M_2| \leq \frac{n}{10}$ singular edges.

    To prove the time complexity, it is enough to find perfect matchings $M_1$ and $M_2$ in time $O(n^3)$.
    Then, by \hyperref[lem7]{Lemma~\ref{lem7}}, we can extend 
    $C_1 = G - M_1$ and $C_2 = (M_1 \cup M_2) - (M_1 \cap M_2)$
    to an embedding in linear time. In \cite{Boyd}, Boyed et al. provided an algorithm that finds a 
    minimum-weight perfect matching in $G$, which intersects every $3$-edge cut in $G$ in exactly 
    one edge in time $O(n^3)$. Then, we use the Edmonds' blossom algorithm to find $M_2$ in time
    $O(n^3)$.
\end{proof}

\begin{remark}
    A forthcoming paper \cite{ghanbari2025time} of the authors improves the time complexity  of finding $M_1$ in the above theorem to $O(n \log^4 n)$ for $3$-edge-connected cubic graphs. Then we can use the faster algorithm from \hyperref[cl18]{Corollary~\ref{cl18}} to achieve a time complexity of $O(n^\frac{3}{2}\log n)$. 
\end{remark}

Given a graph $G = (V, E)$, a \emph{postman set} is a set $J \subseteq E$ such that the multi-set $J \cup E$ is Eulerian, that is, every vertex of the graph is incident with even number of edges. The following known proposition is immediate.

\begin{proposition}
    $J$ is a postman set of $G = (V, E)$ if and only if the following condition holds for every vertex $v\in V$:
    
    $v$ is incident to an odd number of edges in $J$ if and only if $v$ has odd degree in $G$.
\end{proposition}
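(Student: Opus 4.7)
The plan is to reduce the claim to a simple parity computation at each vertex. For each $v \in V$, let $d_G(v)$ denote the degree of $v$ in $G$ and let $d_J(v)$ denote the number of edges of $J$ incident with $v$. Since $J \subseteq E$, every edge of $J$ appears twice in the multi-set $J \cup E$, while every edge of $E \setminus J$ appears once. Hence the number of edge-incidences at $v$ in the multi-set $J \cup E$ equals $d_G(v) + d_J(v)$.

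By the definition given just before the proposition, $J$ is a postman set if and only if $d_G(v) + d_J(v)$ is even for every $v \in V$. This sum is even precisely when $d_G(v)$ and $d_J(v)$ have the same parity, i.e.\ $d_J(v)$ is odd if and only if $d_G(v)$ is odd. Since this equivalence must hold at every vertex, it is exactly the condition stated in the proposition, and both directions follow at once.

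There is essentially no obstacle beyond unpacking the definition; the only point worth spelling out is that edges in $J$ contribute twice to the multi-set $J \cup E$, so that their parity contribution cancels and only the parity of $d_G(v) + d_J(v)$ matters. No structural (e.g.\ connectivity) hypothesis is needed because the paper's definition of ``Eulerian'' here is taken to mean only that all vertex degrees are even.
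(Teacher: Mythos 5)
Your proof is correct and matches the paper's intent: the paper declares this proposition ``immediate'' and gives no proof, and your parity computation (that the degree of $v$ in the multi-set $J \cup E$ is $d_G(v) + d_J(v)$, which is even exactly when $d_G(v)$ and $d_J(v)$ share parity) is precisely the obvious argument being alluded to.
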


Therefore, in a cubic graph $G = (V, E)$, a subset $J\subseteq E$ is a 
postman set if for every $v\in V(G)$, $v$ is incident with $1$ or $3$ edges 
in $J$. The following lemma is known. We include its 
proof for the reader's convenience.

\begin{lemma}\label{postman-SP}
    For every spanning tree $T$ in $G$ there exists a postman set $J$ such that $J \subseteq E(T)$. Moreover, such a postman set can be found in linear time.
\end{lemma}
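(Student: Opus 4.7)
The plan is to root the spanning tree $T$ at an arbitrary vertex $r$ and then construct $J$ bottom-up by a single traversal. For each non-root vertex $v$, let $T_v$ denote the subtree of $T$ rooted at $v$, let $e_v$ denote the edge from $v$ to its parent, and let $s_v$ be the number of vertices in $T_v$ that have odd degree in $G$. Define
\[
J = \{\, e_v : v \neq r \text{ and } s_v \text{ is odd}\,\}.
\]
Clearly $J \subseteq E(T)$, and the $s_v$ values can be computed by one DFS from the leaves toward the root in $O(|V|)$ time, so the algorithmic claim will be immediate once correctness is established.

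To verify that $J$ is a postman set, I use the proposition stated just above the lemma: it suffices to check that a vertex is incident to an odd number of edges of $J$ if and only if it has odd degree in $G$. Write $\mathrm{od}(v) = 1$ if $v$ has odd degree in $G$ and $0$ otherwise. For a non-root vertex $v$ with children $c_1, \dots, c_k$ in $T$, the recurrence
\[
s_v \;\equiv\; \mathrm{od}(v) + \sum_{i=1}^{k} s_{c_i} \pmod{2}
\]
together with the definition of $J$ gives that the degree of $v$ in $J$, namely $[s_v \text{ odd}] + |\{i : s_{c_i} \text{ odd}\}|$, is congruent to $\mathrm{od}(v)$ modulo $2$; this is a short case analysis on the parity of $s_v$.

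For the root $r$, the degree in $J$ is $|\{i : s_{c_i} \text{ odd}\}|$. Here I use the handshake lemma: $s_r = \sum_{v \in V(G)} \mathrm{od}(v)$ is even, and applying the recurrence at $r$ gives $\sum_i s_{c_i} \equiv \mathrm{od}(r) \pmod{2}$, which is exactly the parity required. This closes the verification.

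There is no significant obstacle in this proof; the only subtle point is the root case, which hinges on the fact that the number of odd-degree vertices in $G$ is even. Everything else is an elementary parity bookkeeping argument driven by the tree recurrence, and the linear-time bound follows from the single DFS used to compute the $s_v$'s and decide membership in $J$.
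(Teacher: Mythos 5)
Your proof is correct and takes essentially the same approach as the paper's: a bottom-up parity computation along the spanning tree, and your closed-form set $J=\{e_v : s_v \text{ odd}\}$ is exactly the set produced by the paper's leaf-peeling procedure. The only difference is presentational --- you verify the postman condition directly via the subtree parities (including an explicit handshake-lemma argument at the root, a point the paper leaves implicit when it concludes the final graph is Eulerian), whereas the paper argues inductively while deleting leaves and updating labels.
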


\begin{proof}
        Let $J = \emptyset$. For each vertex 
        $v \in V(T)$, label $v$ with 
        $1$ if the degree of $v$ in $G$ is odd, and label it $0$ otherwise. Then start with a leaf 
        $v \in V(T)$. Let $e = uv$ be the edge incident with $v$. Let $l_v$ be the label of $v$.
        If $l_v = 1$, add $e$ to the set $J$, relabel the vertex $u$, the other end of $e$, with the 
        new label $l'_u = l_u + 1~(\text{mod}~2)$ and continue with the tree $T' = T - v$.
        If $l_v = 0$, then simply remove $v$ from the tree and continue the same process with the 
        tree $T' = T - v$ inductively.
        This can be done in time $O(n)$ where $n = |V(G)|$.
        First we create a list $L$ of leaves of $T$, this can be done in time $O(n)$. Then, 
        in each step of the above process, we pick a vertex $v$ from the list, and create the tree $T'$. 
        We update $L$ to get a list of leaves of $T'$: we remove $v$ and possibly add $u$ to $L$ (if $\deg_{T'}u=1$). 
        The time complexity of this 
        part is also $O(n)$ as the number of steps is at most $|V(T)|$.  
        Note that in the above process, every time we handle $v$,  
        a leaf of $T$, the degree of $v$ in the new multi-graph $G' = (V(G), E(G) \cup J)$ becomes 
        even. So the final graph $G'$ will be an Eulerian graph and therefore $J$ is a postman set.
\end{proof}

\begin{lemma}\label{postman-CDC}
    Let $M$ be a perfect matching and $J$ be a postman set in a bridgeless cubic graph $G$. Then, $C_1 = G - J$ and $C_2 = M\triangle J$ form a partial CDC.
\end{lemma}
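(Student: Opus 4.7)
The plan is to verify conditions (C1) and (C2) of a partial CDC directly by case analysis. Throughout, for each vertex $v$, let $m = m(v)$ denote the unique edge of $M$ incident with $v$, and note that $d_J(v) \in \{1, 3\}$ since $G$ is cubic and $J$ is a postman set (so $d_G(v) + d_J(v)$ must be even).

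First, I would check that $C_1$ and $C_2$ are each a disjoint union of circuits, so that they can legitimately be viewed as a collection of closed walks as the definition of partial CDC requires. For $C_1 = G - J$ this is immediate: $d_{C_1}(v) = 3 - d_J(v) \in \{0, 2\}$. For $C_2 = M \triangle J$, both $d_M(v) = 1$ and $d_J(v)$ are odd, so
\[
d_{C_2}(v) = d_M(v) + d_J(v) - 2|M \cap J \cap \delta(v)|
\]
is even, and in fact lies in $\{0, 2\}$ as well.

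For condition (C1), I would classify each edge $e$ by its membership in $M$ and $J$ (four cases) and read off how many of $C_1, C_2$ contain it: edges in $M \cap J$ are in neither, edges in $J \setminus M$ lie only in $C_2$, edges in $M \setminus J$ lie in both $C_1$ and $C_2$, and edges in $E \setminus (M \cup J)$ lie only in $C_1$. The only case producing multiplicity two is $e \in M \setminus J$, and there $e$ is covered once by each of two different walks, so (C1) is satisfied.

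For condition (C2), I would fix a vertex $v$ and split on $d_J(v)$. If $d_J(v) = 3$, then all three edges at $v$ are removed in $C_1$, so $C_1$ uses no angle at $v$; meanwhile $C_2$ contains exactly the two edges of $J \setminus M$ at $v$, using a single angle. If $d_J(v) = 1$, I would split further according to whether the unique $J$-edge at $v$ coincides with $m$ or not: in the coincident sub-case $v$ has degree $2$ in $C_1$ and degree $0$ in $C_2$; in the non-coincident sub-case $C_1$ uses the angle $\{m, k\}$ with $k$ the unique edge in neither $M$ nor $J$, while $C_2$ uses the angle $\{m, j\}$ with $j$ the $J$-edge, and these two angles are distinct because $j \neq k$. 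The main point to be careful about is exactly this last sub-case — the sub-case where the $M$-edge is also a $J$-edge behaves qualitatively differently in $C_2$ — but once separated out the argument is a direct verification and requires no further machinery.
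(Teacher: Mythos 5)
Your proof is correct and follows essentially the same route as the paper: the same four-way edge classification for (C1), and the same case split on whether $v$ meets one or three edges of $J$ for (C2), with your two sub-cases of the $d_J(v)=1$ case corresponding exactly to the two cases the paper borrows from the proof of Theorem~\ref{obs19}. Your explicit degree count showing $C_1$ and $C_2$ are cycles is a slightly more self-contained writeup of what the paper states as "easy to check," but the argument is the same.
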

\begin{proof}
    It is easy to check that $G-J$ and $M\triangle J$ (the symmetric difference) are cycles. Next, we prove that these cycles form a partial CDC. Each edge of $M \backslash J$ is used by both $C_1$ and $C_2$. Each edge 
    of $J \backslash M$ is used by $C_2$, and each edge of $E(G)\backslash (M \cup J)$ is used by $C_1$. The remaining edges ($J \cap M$) are not used by any of $C_1$ or $C_2$. Therefore, each edge is covered at most twice by $C_1$ and $C_2$. 
    To prove that $C_1$ and $C_2$ form a partial CDC, it remains to show that no angle is used twice by $C_1$ and $C_2$. Let $e$, $f$, and $g$ be three edges incident with a vertex $v$. Every vertex $v \in V(G)$ is covered by one edge from $M$ and either one or three edges from $J$. Therefore, we have the following two cases. 

    \begin{enumerate}
        \item There is only one edge from $J$ that is incident with $v$. The proof of this case is the same as the proof of case $1$ and $2$ in the proof of \hyperref[obs19]{Theorem~\ref{obs19}}. Therefore, no angle is used twice.   
        \item Vertex $v$ is incident with three edges from $J$. Let $e$ be the edge that belongs to the perfect matching $M$. 
        In this case, $e \in M\cap J$ and none of $C_1$ nor $C_2$ uses $e$. Therefore, angles $\{e,f\}$ and $\{e,g\}$ are used neither by $C_1$ nor by $C_2$. Both $f$ and $g$ are not in $M$ and are used by $C_2$. Therefore, the angle $\{f,g\}$ is used once by $C_2$. This completes the proof as no angle is used twice by $C_1$ and $C_2$.
    \end{enumerate}
\end{proof}

\begin{figure}\label{fig3}
    \begin{center}
        \includegraphics[scale=.2]{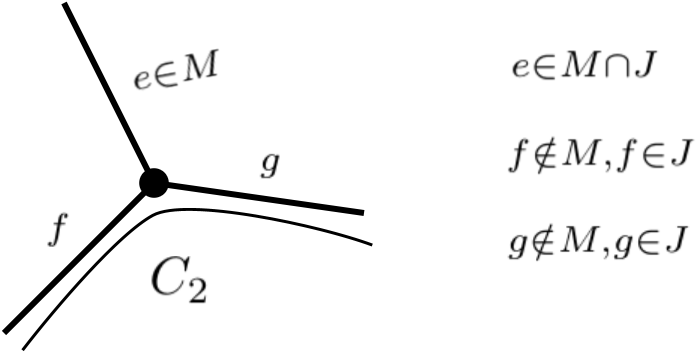}
    \end{center}
    \caption{Case $2$ in the proof of \hyperref[postman-CDC]{Lemma~\ref{postman-CDC}}.}
\end{figure}

\begin{theorem} \label{cyclically 2k-edge-connected}
Let $G$ be a cyclically $2k$-edge-connected cubic graph. There exists an embedding of $G$ with at most $\frac{n}{2k}$ singular edges. Moreover, such an embedding can be found in time
$\tilde{O}((kn)^\frac{3}{2})$.
\end{theorem}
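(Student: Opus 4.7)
The plan is to apply \hyperref[postman-CDC]{Lemma~\ref{postman-CDC}} with a perfect matching $M$ and a postman set $J$ drawn from a carefully chosen spanning tree that uses as few matching edges as possible, bounding that number via the cyclic $2k$-edge-connectivity.

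First I would pick any perfect matching $M$ of $G$; then $G-M$ is a $2$-factor whose components are circuits $C_1,\dots,C_c$. The key observation is that the edge cut $\delta(V(C_i))$ in $G$ consists precisely of the matching edges incident with $V(C_i)$, so $|\delta(V(C_i))| = |V(C_i)|$. When $c\ge 2$, the side $V(C_i)$ contains the circuit $C_i$ and the complementary side contains some other component $C_j$, so both sides contain a cycle; the cyclic $2k$-edge-connectivity then forces $|V(C_i)|\ge 2k$, and summing over $i$ yields $c\le n/(2k)$. (If $c=1$, then $G-M$ is a Hamilton circuit and the argument below produces $0$ singular edges.)

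Next I would construct a spanning tree $T$ by deleting one edge from each $C_i$ and adding exactly $c-1$ edges of $M$ to reconnect the resulting spanning forest; such edges exist because contracting each $C_i$ in $G$ yields a connected multigraph on $c$ vertices all of whose edges lie in $M$. Thus $|T\cap M|\le c-1\le n/(2k)-1$. Applying \hyperref[postman-SP]{Lemma~\ref{postman-SP}} to $T$ yields a postman set $J\subseteq E(T)$, and in particular $|J\cap M|\le |T\cap M|\le n/(2k)-1$. By \hyperref[postman-CDC]{Lemma~\ref{postman-CDC}}, the closed walks $G-J$ and $M\triangle J$ form a partial CDC whose uncovered edges are exactly those in $J\cap M$, and extending via \hyperref[lem7]{Lemma~\ref{lem7}} produces an embedding whose singular edges all lie in $J\cap M$, giving the desired bound of $n/(2k)$.

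For the running time, every step after the matching is linear: computing the components of $G-M$, building the tree $T$, finding $J$ via \hyperref[postman-SP]{Lemma~\ref{postman-SP}}, and performing the extension via \hyperref[lem7]{Lemma~\ref{lem7}}. The stated $\tilde{O}((kn)^{3/2})$ complexity should come from using a weighted perfect matching algorithm in the style of Duan, analogous to the role Boyd et al.'s algorithm plays in the proof of \hyperref[obs19]{Theorem~\ref{obs19}}, possibly needed to enforce that $M$ respects the cut structure for larger $k$. The main obstacle will be the cyclic-connectivity step: one must verify cleanly that both sides of every cut $\delta(V(C_i))$ really contain a cycle, so that the $2k$-edge-connectivity hypothesis is applicable to every component — this is immediate for $c\ge 2$ but requires handling the $c=1$ case separately.
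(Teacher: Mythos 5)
Your proof is essentially correct, and it follows the paper's skeleton (perfect matching $M$, a spanning tree of $G$ containing few edges of $M$, then Lemma~\ref{postman-SP}, Lemma~\ref{postman-CDC} and Lemma~\ref{lem7}), but you replace the paper's key counting step by a more elementary one. The paper contracts the components of $G-M$ to form a $2k$-edge-connected multigraph $H$ with $E(H)\cong M$, invokes Nash-Williams/Tutte to get $k$ edge-disjoint spanning trees of $H$, and concludes that a spanning tree of $H$ has at most $|E(H)|/k=n/(2k)$ edges; algorithmically this tree-packing step (via Quanrud) is what costs $\tilde{O}((kn)^{3/2})$ and dominates the running time. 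You instead bound the number $c$ of components of $G-M$ directly: for $c\ge 2$ each cut $\delta(V(C_i))$ is cycle-separating, so it has at least $2k$ edges, forcing $|V(C_i)|\ge 2k$ and $c\le n/(2k)$; then any lift of a spanning tree of $H$ uses only $c-1\le n/(2k)-1$ matching edges. This avoids the tree-packing theorem altogether, gives a marginally better bound, and makes the algorithm run in $O(n\log^2 n)$ plus linear time, comfortably within the claimed $\tilde{O}((kn)^{3/2})$ --- so your uncertainty about where that bound comes from is resolved: in the paper it is the spanning-tree-packing computation, and in your version no such step is needed (no weighted matching à la Duan is required here). One small slip: $|\delta(V(C_i))|=|V(C_i)|$ need not hold, since a matching edge may have both ends in the same component $C_i$; but this only gives $|\delta(V(C_i))|\le |V(C_i)|$, which is exactly the direction you need, so the argument stands.
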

    \begin{proof}
        Construct a multigraph $H$ from $G$ as follows: select any perfect matching $M$ of $G$ and remove the edges of $M$ from $G$. It can be done in time $O(n \log^2 n)$ \cite{Diks}. Now, contract each component of $G-M$ to a vertex. Bring back the edges of $M$ and name the new graph~$H$. Note that $E(H)$ is in natural $1-\text{to}-1$ correspondence with $M$.
        Clearly,~$H$ is $2k$-edge-connected as the graph $G$ is cyclically $2k$-edge-connected. By Nash-Williams$/$Tutte's Theorem~\cite{nash1961edge,tutte}, 
        there exist $k$ edge-disjoint spanning trees in~$G$ (This part can be done in time 
        $\tilde{O}((kn)^\frac{3}{2})$ \cite{Quanrud}, also see Page 879 of Schrijver's book.) Therefore, some 
        spanning tree in $H$ has at most 
        $$\frac{|E(H)|}{k} = \frac{|M|}{k} = \frac{n}{2k}$$
        edges. 
        Let $T_H$ be such spanning tree in $H$ (the smallest of these $k$) and let $T_G$ be any extension of 
        $T_H$ to a spanning tree of $G$. We use \hyperref[postman-SP]{Lemma~\ref{postman-SP}} to 
        get a postman set $J\subseteq E(T_G)$ in linear time. By \hyperref[postman-CDC]{Lemma~\ref{postman-CDC}}, 
        $C_1 = G-J$ and 
        $C_2 = M\triangle J$ form a partial CDC. 
        By \hyperref[lem7]{Lemma~\ref{lem7}}, we can extend $C_1$ and $C_2$
        to an embedding in linear time.
        Then, the number of singular edges is at 
        most  $|M \cap J|$ as potential singular edges are those that are not covered 
        by any of $C_1$ and $C_2$. We know that $J \subseteq E(T_G)$ 
        and $M \cap J \subseteq E(T_H)$ (using the $1-\text{to}-1$ correspondence of $E(H)$ with $M$). By our choice of $T_H$ we get
        $$|M\cap J| \leq |E(T_H)| \leq \frac{n}{2k}.$$
        Altogether, we can find such an embedding in time $\tilde{O}((kn)^\frac{3}{2})$. 
    \end{proof}

We improve our bound for all cyclically $k$-edge-connected cubic graphs in the following theorem.
Note that for $k \le 5$, the bound from \hyperref[obs19]{Theorem~\ref{obs19}} is better. 
The next theorem has a faster algorithmic version, though.

\begin{theorem}\label{thm6}
Let $G$ be a cyclically $k$-edge-connected cubic graph $(k\geq 3)$. There exists an embedding of~$G$ with at most $\frac{n}{2k}$ singular edges.
Moreover, such an embedding can be found in time $O(n^\frac{3}{2}\log n)$. 
\end{theorem}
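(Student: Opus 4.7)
The plan is to parallel the two-perfect-matching approach of Theorem~\ref{obs19}, replacing Corollary~\ref{cl18} by a sharper bound that exploits cyclic $k$-edge-connectivity. Given any perfect matching $M_1$ of $G$, the goal is to produce a second perfect matching $M_2$ with $|M_1 \cap M_2| \leq n/(2k)$; after that, the partial CDC $C_1 = G - M_1$, $C_2 = M_1 \triangle M_2$ from the proof of Theorem~\ref{obs19} together with Lemma~\ref{lem7} finishes the argument, since the resulting embedding has at most $|M_1 \cap M_2|$ singular edges.

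By the integrality of the perfect matching polytope (Theorem~\ref{Edmonds}), it suffices to exhibit a fractional perfect matching $x \in \mathcal{P}_{PM}(G)$ with $x(M_1) \leq n/(2k)$; an $M_2$ attaining the bound can then be found as a minimum-weight perfect matching with weights $\chi_{M_1}$. I would take
\[
x_e = \begin{cases} 1/k & \text{if } e \in M_1, \\ (k-1)/(2k) & \text{if } e \notin M_1, \end{cases}
\]
so that $x(M_1) = n/(2k)$ and the degree equality $x(\delta(v)) = 1/k + 2(k-1)/(2k) = 1$ is automatic. For an odd set $U \subseteq V(G)$, writing $c = |\delta(U)|$ and $m = |M_1 \cap \delta(U)|$, the odd-cut constraint $x(\delta(U)) \geq 1$ rearranges to $(k-1)c - (k-3)m \geq 2k$. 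I would verify this in three cases: when $|U|=1$ we have $c=3$, $m=1$ and equality; when $\delta(U)$ is a cyclic cut, cyclic $k$-edge-connectivity gives $c \geq k$, and combined with $m \leq c$ this yields $(k-1)c - (k-3)m \geq 2c \geq 2k$; otherwise one side of the cut, say $U$, induces a forest, so cubicity forces $c \geq |U|+2$ and the matching condition gives $m \leq |U|$, which yields $(k-1)c - (k-3)m \geq 2|U| + 2(k-1) \geq 2k$. These cases are exhaustive since any non-cyclic cut has at least one side inducing a forest.

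On the algorithmic side, any $M_1$ can be found in $O(n \log^2 n)$ time by the algorithm of Diks, $M_2$ can be computed as a minimum-weight perfect matching in $O(n^{3/2} \log n)$ via the algorithm of Duan et al., and Lemma~\ref{lem7} extends the partial CDC to an embedding in linear time. The total running time is therefore $O(n^{3/2} \log n)$, matching the claim.

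The hard step will be the feasibility of $x$: the same fractional matching has to simultaneously absorb slack from two independent sources---cyclic $k$-edge-connectivity on cyclic cuts and the cubicity-forest inequality $|\delta(U)| \geq |U|+2$ on non-cyclic cuts. The values $1/k$ and $(k-1)/(2k)$ are calibrated so that both kinds of slack just suffice, and the case analysis sketched above is the heart of the argument.
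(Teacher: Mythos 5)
Your proposal is correct and takes essentially the same approach as the paper: the identical fractional perfect matching (value $1/k$ on $M_1$, $(k-1)/(2k)$ off $M_1$), verified to lie in $\mathcal{P}_{PM}(G)$ via Theorem~\ref{Edmonds}, then realized by a minimum-weight perfect matching computation and combined with the partial CDC construction of Theorem~\ref{obs19} and Lemma~\ref{lem7}, with the same algorithmic ingredients and running time. The only difference is in how the odd-cut constraints are checked --- the paper splits on $|\delta(U)\setminus M_1|\in\{0,1,\ge 2\}$ (ruling out the middle case by parity), while you split into cyclic cuts versus cuts with a forest side --- and both verifications are valid.
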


    \begin{proof}
        By the same technique as in the proof of \hyperref[obs19]{Theorem~\ref{obs19}}, it is enough to find two perfect matchings $M$ and $M'$ with $|M\cap M'| \leq \frac{n}{2k}$. Let $M$ be any perfect matching in $G$. (We can find this in time $O(n \log^2 n)$ as above, see \cite{Diks}.) Define a function $f:E(G) \to\mathbb{R}$ as follows
        \[f(e) = 
        \begin{cases} 
            \frac{1}{k} & \text{if}~~~   e\in M\\
    
            \frac{k-1}{2k} & \text{if}~~~ e \notin M.
        \end{cases}
        \] 

        We check that $f$ is in $\mathcal{P}_{PM}(G)$. Since the graph is cubic, around each vertex~$v$, one edge is in $M$ and has value $\frac{1}{k}$ and the other two edges connected to $v$ have value $\frac{k-1}{2k}$, and the sum of these three values is 1. It remains to show that for every set $U \subseteq V(G)$ with $|U|$ odd, 
        $x(\delta(U)) = \sum\limits_{e \in \delta(u)}f(e) \geq 1$. Then, by \hyperref[Edmonds]{Theorem~\ref{Edmonds}}, $f \in \mathcal{P}_{PM}(G)$. So, let $U$ be an odd-size vertex set in $G$. Then, we have one of the following cases:

        \begin{enumerate}
            \item $\delta(U) \subseteq M$. By removing the matching edges, every vertex in $U$ is incident with exactly two edges in $G[U]$. Therefore, it consists of a collection of cycles. As the graph is cyclically $k$-edge-connected, $|\delta(U)| \geq k$, So
            $$\sum\limits_{e\in \delta(U)} f(e) =
            \sum\limits_{e\in \delta(U)} \frac{1}{k} \geq
            k\cdot\dfrac{1}{k} = 1$$
            \item $|\delta(U)\backslash M| = 1$.
            %Let $S$ be the set of vertices of degree $3$ in $U$.% 
                We have the following two sub-cases. There exists a vertex $u$ with either degree one or two in $G[U]$ (see the picture below). We show that each sub-case is impossible to happen. Note that in the following pictures, bold edges are matching edges. 
                \begin{center}
                    \includegraphics[scale=.19]{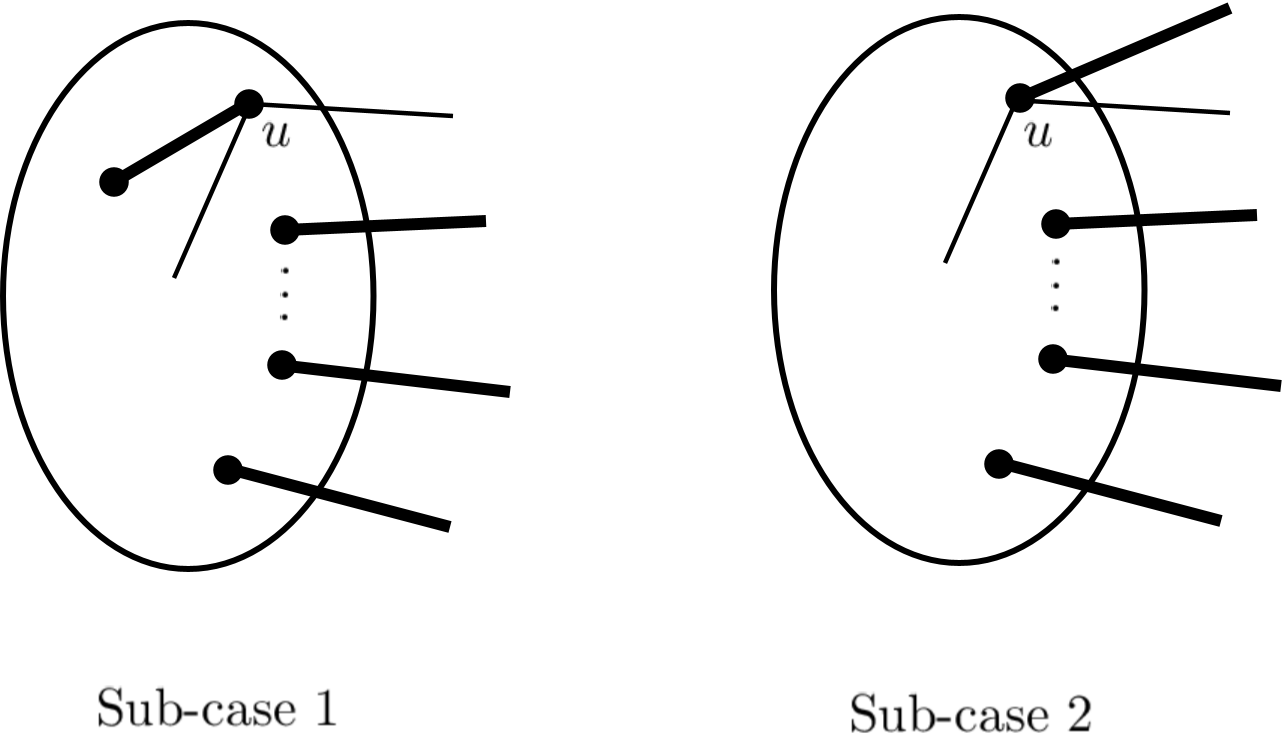}
                \end{center}
            In both sub-cases, there exists a vertex $u$ that has exactly one non-matching edge in $G[U]$. Let us remove the edges of $M$ from $G[U]$. Then, every vertex in $U \backslash \{u\}$ is incident with two edges in $G[U]$. So, $G[U]\backslash M$ has one vertex of degree $1$ and the rest of degree $2$, which is impossible.

            \item $|\delta(U)\backslash M| \geq 2$.
            In this case, as $|U|$ is odd and $|\delta(U)\backslash M| \geq 2$, we have $|\delta(U)| \geq 3$. If $\delta(U)$ has three non-matching edges, then 
            $$\sum\limits_{e\in \delta(U)} f(e) \geq 3\cdot\dfrac{k-1}{2k}\geq~1.$$
            Note that $k\geq 3$. If there is at least one matching edge, then 
            $$\sum\limits_{e\in \delta(U)} f(e) \geq 2\cdot\frac{k-1}{2k} + \frac{1}{k} = 1.$$
            
        \end{enumerate}

        As $f$ is in the perfect matching polytope, $f$ is a convex combination of 
        $\chi_{M_{i}}$, for some perfect matchings $M_i$. Set $S = E(G) \backslash M$. By definition of $f$, we have $f(S) = \frac{k-1}{2k}|S|$, hence 
        $$
          \chi_{M_i}(S) \geq \frac{k-1}{2k}|S| = \frac{2}{3}|E(G)| \frac{k-1}{2k} 
        $$
        for some perfect matching $M_i$ involved in the convex combination for $f$. Now,  
        $$|M \cup M_i| = |M| + |M_i \backslash M| \geq |E(G)|
        \Bigl(\frac{1}{3} + \frac{2}{3}\cdot\frac{k-1}{2k}\Bigr) = \frac{2k-1}{3k}|E(G)|$$ 
        Therefore, $|M\cap M_i| \leq \frac{n}{2k}$, and we can put $M' = M_i$. 
        This proves the existence of~$M'$. To find it efficiently in time 
        $O(n^\frac{3}{2}\log n)$, we use the algorithm of \cite{Duan2018}
        to find minimum weight perfect matching, as in the proof of \hyperref[sec3]{Corollary~\ref{cl18}}. 
    \end{proof}

\section{Conclusion}
We studied a new approach to the cycle double cover conjecture. We first introduced the notion of partial 
CDC, and then we showed that by having a partial CDC, we can 
extend it to an embedding in which some of the edges might have the same face on both sides. 
We called these edges singular and then provided nontrivial upper bounds on the minimum number of 
singular edges in an embedding of a cubic graph. We showed that these embeddings can 
be found in polynomial time in \hyperref[obs17]{Theorem~\ref{obs17}}, \hyperref[thm6]
{Theorem~\ref{thm6}}, \hyperref[obs19]{Theorem~\ref{obs19}} and  
\hyperref[cyclically 2k-edge-connected]{Theorem~\ref{cyclically 2k-edge-connected}}.
 It would be interesting to know what the limit of this technique is -- how many 
singular edges there must be in an extension of a given partial CDC.

In the future, it would be interesting to find better upper bounds on the number of singular edges. 
In particular, a sublinear estimate of the number of singular edges would be very interesting. 
Our hope is that this will guide us to a discovery of new
techniques that will eventually solve CDC conjecture, that is, to show that
there is an embedding with no singular edge. 

\section{Acknowledgment}
Special thanks should be given to Matt DeVos for helpful discussions that led to the extension of our results in \hyperref[sec3]{Section~\ref{sec3}}.

A preliminary version of this work was published in the proceedings of the International Workshop on Combinatorial Algorithms 2024, see \cite{Ghanbari}.

% ---- Bibliography ----
%
% BibTeX users should specify bibliography style 'splncs04'.
% References will then be sorted and formatted in the correct style.
%
\bibliographystyle{alpha}
\bibliography{RN.bib}

\end{document}